\newtheorem{thm}{Theorem}[section]
\newtheorem{lem}[thm]{Lemma}
\newtheorem{prop}[thm]{Proposition}
\newtheorem{cor}[thm]{Corollary}
\newtheorem{conj}[thm]{Conjecture}
\newtheorem*{remark}{Remark}
\newcommand\cE{{\mathcal E}}
\newcommand\cF{{\mathcal F}}
\newcommand\bR{{\mathbb R}}
\newcommand\bC{{\mathbb C}}
\newcommand\Aut{{\rm Aut}}
\newcommand\cO{{\mathcal O}}
\def\Im{\mathop{\rm Im}\nolimits}
\def\Re{\mathop{\rm Re}\nolimits}
\def\Ric{\mathop{\rm Ric}\nolimits}
\def\Vol{\mathop{\rm Vol}\nolimits}
\def\tr{\mathop{\rm tr}\nolimits}
\def\Aut{\mathop{\rm Aut}\nolimits}
\def\Iso{\mathop{\rm Iso}\nolimits}
\def\Ker{\mathop{\rm Ker}\nolimits}
\def\dbar{\overline\partial}
\def\ddbar{\partial\overline\partial}
\def\cO{{\mathcal O}}
\def\cE{{\mathcal E}}
\def\cF{{\mathcal F}}
\def\cL{{\mathcal L}}
\def\cC{{\mathcal C}}
\def\cH{{\mathcal H}}
\def\cD{{\mathcal D}}
\let\ol=\overline
\let\wt=\widetilde
\def\bC{{\mathbb C}}
\def\bR{{\mathbb R}}
\begin{document}

\title{On deformation of extremal metrics}
\date{\today}
\author{Xiuxiong Chen, Mihai P\u aun and Yu Zeng}

\maketitle

\section{Introduction}

In 1950's, E. Calabi (cf. \cite{Ca1}, \cite{Ca2}) proposed a program aiming to 
construct ``the best" metrics one could expect to find in a given K\"ahler class: these objects are 
currently called \emph{extremal metrics}.
To this end he has introduced a functional (the {Calabi energy}) so that  
the said metrics are obtained as 
critical points of it.
The K\"ahler-Einstein metrics (and more generally, the constant scalar curvature K\"ahler metrics
referred as cscK hereafter) are both special cases of extremal metrics. 

The main questions 
concerning the existence and uniqueness of K\"ahler-Einstein metrics on manifolds whose first Chern class is negative or zero
have been clarified 
in the 80's thanks to the fundamental contributions of Aubin, Calabi and Yau (cf. \cite{A}, \cite{calabi} and \cite{Y}, respectively). The remaining Fano case has been only recently settled by the crucial work of 
Chen-Donaldson-Sun (cf. \cite{CDS1}, \cite{CDS2}, \cite{CDS3}). 

After this major achievement, the study of extremal metrics should naturally be \emph{the} dominant subject
in the field. However, even the most
basic existence questions  concerning these metrics seem to be excessively difficult, given that the resulting partial differential equation
one has to deal with is of order four. Of course, the equation corresponding to a metric with prescribed Ricci curvature is of order four as well, but one can reduce it easily to a fully non-linear second order equation. This is no longer possible e.g. in the cscK case for general K\"ahler manifolds.

The continuity method is
a very powerful technique in PDE theory. It was successfully used by Aubin and Yau in 
their respective articles on K\"ahler-Einstein metrics. In \cite{CP}, the first named author 
proposed a continuity path which is very well adapted to the category of extremal metrics (regardless to their K\"ahler classes).
One can see that if all the geometric objects involved belong to a multiple of the canonical class,
then the path in \cite{CP} is obtained by taking the 
trace with respect to the solution metric of the continuity path used by Aubin and Yau. In this sense, it represents a natural extension of their techniques.
We refer to \cite{CP} for the proof of the basic facts about this new approach, including a crucial openness result and a few conjectural pictures.
\medskip

\noindent In the present article we are are pursuing this circle of ideas by establishing two deformation results about the cscK and extremal metrics, respectively. Let $(M, \omega)$ be a compact complex manifold endowed with a 
K\"ahler metric; we denote by $[\omega]\in H^{1,1}(X, \bR)$ the cohomology class corresponding to $\omega$.
We define the following space of potentials
\[
{\mathcal{H}^{\infty}(M)} = \big\{ \varphi \in C^{\infty}(M): \omega_\varphi = \omega+ \sqrt{-1} \partial \bar \partial \varphi > 0\big\}.
\]
If $\varphi\in \cH^\infty(M)$, then we denote by $R_\varphi$ the scalar curvature of the corresponding metric $\omega_\varphi$, and by $\underline R$ its average. i.e. 
$$\underline R:= \frac{1}{\Vol(M, \omega)}\int_MR_\varphi
\omega_\varphi^n.$$
Our first result states as follows.
\begin{thm}\label{thm1.1}
Let $(M, \omega)$ be a compact K\"ahler manifold such that there exists a cscK metric $\omega_{\varphi_0} \in [\omega]$. Then there exist $\epsilon > 0$ and a smooth function $\phi: (1-\epsilon, 1]\times M\to \bR$ such that 
$\varphi_t:= \phi(t, \cdot)\in \cH^\infty(M)$ and such that 
the corresponding metric verifies the equation
\begin{equation}\label{tcscK}
R_{\varphi_t} - \underline{R} - (1-t) (\tr_{\varphi_t}\omega - n) = 0.
\end{equation}
Moreover, there exists a holomorphic automorphism $f$ of $M$ such that 
$\omega_{\varphi_1}= f^\star \omega_{\varphi_0}$.
\end{thm}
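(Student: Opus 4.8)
The plan is to solve \eqref{tcscK} by the implicit function theorem, anchored at the cscK solution at $t=1$ and deformed toward $t<1$. Set $\mathcal{F}(t,\varphi):=R_\varphi-\underline R-(1-t)(\tr_\varphi\omega-n)$, viewed as a map between appropriate H\"older spaces of potentials and functions of $\omega_\varphi$-integral zero, so that $\mathcal{F}(1,\varphi_0)=0$ precisely because $\omega_{\varphi_0}$ is cscK. First I would compute $D_\varphi\mathcal{F}(1,\varphi_0)$ and identify it with the negative Lichnerowicz operator $-\mathcal{D}^\ast\mathcal{D}$ of $\omega_{\varphi_0}$, where $\mathcal{D}=\bar\partial\circ\nabla^{1,0}$; the first order term $\langle\nabla R,\nabla(\,\cdot\,)\rangle$ drops out since $R_{\varphi_0}$ is constant. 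This operator is self-adjoint and non-negative, and its kernel $\mathfrak{h}$ consists of the holomorphy potentials, identified with the Lie algebra of the reduced automorphism group. The presence of $\mathfrak{h}$ is exactly why the naive implicit function theorem fails at $t=1$, and it is the origin of the automorphism $f$.

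To break the degeneracy I would couple the equation to the automorphism group, exploiting two facts. First, $\tr_\varphi\omega-n$ is the gradient of a geodesically convex energy $\mathcal{J}_\omega$ whose critical points are the metrics with $\tr_\varphi\omega$ constant, and whose second variation $P$ is a non-negative second order operator with kernel the constants; consequently the full linearization $L_t=-\mathcal{D}^\ast\mathcal{D}-(1-t)P$ has only constants in its kernel for every $t<1$, since $L_t\psi=0$ forces both $\mathcal{D}\psi=0$ and $\langle P\psi,\psi\rangle=0$, the latter making $\psi$ constant. Second, a solvability condition: $\partial_t\mathcal{F}(1,\varphi_0)=\tr_{\varphi_0}\omega-n$, and a path with $\varphi_1=\varphi_0$ can be started only if this lies in $\im(\mathcal{D}^\ast\mathcal{D})=\mathfrak{h}^{\perp}$, i.e. only if $\int_M(\tr_{\varphi_0}\omega-n)\,h\,\omega_{\varphi_0}^n=0$ for every $h\in\mathfrak{h}$. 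This integral is the differential of $\mathcal{J}_\omega$ along the $\Aut_0(M)$-orbit of $\omega_{\varphi_0}$, and need not vanish at $\varphi_0$; the strict convexity of $\mathcal{J}_\omega$ along the orbit produces a unique representative $f^\ast\omega_{\varphi_0}$ at which it does, thereby both selecting $f$ and removing the obstruction.

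Taking $f^\ast\omega_{\varphi_0}$ as the anchor, I would run the implicit function theorem in a slice transverse to the orbit --- say on potentials $L^2(\omega_{\varphi_0})$-orthogonal to $\mathfrak{h}$, or equivalently on potentials invariant under the maximal compact subgroup $K\subset\Aut_0(M)$ fixing the cscK metric. On this slice the linearization at $t=1$ is invertible onto its image, the obstruction has been arranged to vanish, and one obtains a smooth family $\varphi_t$, $t\in(1-\epsilon,1]$, solving \eqref{tcscK} with $\omega_{\varphi_1}=f^\ast\omega_{\varphi_0}$; joint smoothness in $(t,x)$ and membership in $\cH^\infty(M)$ follow from elliptic regularity and the openness of the condition $\omega_{\varphi_t}>0$. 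The final clause is then clear: $\omega_{\varphi_1}$ satisfies \eqref{tcscK} at $t=1$, which is the cscK equation, so by uniqueness of cscK metrics in $[\omega]$ up to $\Aut_0(M)$ it must coincide with $f^\ast\omega_{\varphi_0}$.

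I expect the principal obstacle to be the uniform invertibility of $L_t$ on the slice as $t\uparrow 1$: one must control the competition between the fourth order part $\mathcal{D}^\ast\mathcal{D}$, which degenerates on $\mathfrak{h}$, and the surviving second order part $(1-t)P$, and show the resulting a priori estimates do not blow up at the endpoint $t=1$. Equally delicate is the matching between $\ker\mathcal{D}^\ast\mathcal{D}$ and the infinitesimal orbit directions --- here the cscK hypothesis is used to guarantee that the linearization is exactly the self-adjoint operator $-\mathcal{D}^\ast\mathcal{D}$, so that its kernel is precisely the holomorphy potentials and the $\Aut_0(M)$-orbit supplies exactly the missing directions. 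The remainder is a fairly standard deployment of the implicit function theorem and elliptic regularity, combined with the openness result of \cite{CP} to propagate the solution into $t<1$.
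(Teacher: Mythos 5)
Your setup is the right one and matches the paper up to the point where you invoke the implicit function theorem: the identification of the linearization with $-\cD^\ast\cD$, the kernel $\mathfrak{h}$ as the source of the trouble, and the selection of the anchor point by minimizing the convex functional $J_\omega - nI$ along the $\Aut_0(M)$-orbit (so that $\tr_{\varphi_1}\omega-n$ becomes $L^2$-orthogonal to $\mathfrak{h}$) are all exactly what the paper does. The gap is in the sentence ``on this slice the linearization at $t=1$ is invertible onto its image, the obstruction has been arranged to vanish, and one obtains a smooth family.'' Restricting the \emph{unknown} to a slice transverse to $\mathfrak{h}$ does not restrict the \emph{target}: the equation $F(\varphi,t)=0$ still has a component along $\mathfrak{h}$, and the linearization $-\cD^\ast\cD$, even when injective on the slice, is not surjective --- its image is $\mathfrak{h}^\perp$. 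So the slice version of the IFT only solves the projected equation $\pi_{\mathfrak{h}^\perp}\circ F=0$; the finite-dimensional component $P(u,t):=\pi_{\mathfrak{h}}\circ F(\varphi_1+u+\psi(u,t),t)$ survives and must be solved separately. (Passing to $K$-invariant potentials does not kill $\mathfrak{h}$ either: the $K$-invariant holomorphy potentials correspond to $N_K/K$, which is nontrivial in general.)

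This remaining bifurcation equation is the heart of the matter, and it is genuinely degenerate: $P(u,1)\equiv 0$ for \emph{all} $u\in\mathfrak{h}$, because every point of the orbit is a solution at $t=1$, so $\partial_u P$ vanishes identically at $t=1$ and the IFT cannot be applied to $P$ itself. The paper's resolution (following Bando--Mabuchi) is to pass to $\widetilde P(u,t)=P(u,t)/(t-1)$, extend it continuously to $t=1$ by $\partial_t P|_{(u,1)}$, and prove that $\partial_u\widetilde P|_{(0,1)}$ is invertible on $\mathfrak{h}$. That invertibility is not formal: it requires the Leibniz-type identity for the Lichnerowicz operator (Lemma \ref{lem3}), which converts $\partial_u\widetilde P|_{(0,1)}(v)$ into $\pi_1\big(-\langle\partial v,\bar\partial(\tr_{\varphi_1}\omega-n)\rangle-\langle\partial\bar\partial v,\omega\rangle\big)$ and yields $\int \partial_u\widetilde P(v)\,v\,\omega_{\varphi_1}^n=\int v_{,\bar\alpha}v_{,\beta}\,\omega_{\alpha\bar\beta}\,\omega_{\varphi_1}^n>0$ for $v\neq 0$ --- i.e.\ the strict convexity of $\iota$ in the orbit directions is what makes the minimizer a non-degenerate zero of $\widetilde P$. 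Your closing paragraph, which locates the difficulty in ``uniform invertibility of $L_t$ as $t\uparrow 1$,'' points at a symptom but not at this mechanism; without the division by $(t-1)$ and the second-order positivity computation, the branch of solutions emanating from $\varphi_1$ is not produced. (A minor additional point: your last sentence appeals to uniqueness of cscK metrics up to $\Aut_0(M)$, which in this paper is a \emph{consequence} of the theorem, so it cannot be used in its proof; fortunately $\omega_{\varphi_1}=f^\ast\omega_{\varphi_0}$ already holds by construction of the anchor.)
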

\medskip

\noindent Following the terminology introduced by J. Fine \cite{JF} and J. Stoppa cf. \cite{JS}, a metric verifying the condition \eqref{tcscK}
is called \emph{twisted constant scalar curvature metric}. 
\medskip

\noindent The generalization of this notion in the context 
of extremal metrics was formulated in \cite{CP} as follows. 
A metric $\displaystyle \omega_\varphi$ is called \emph{twisted extremal K\"ahler metric} if
there exists $ t \in (0,1)$ such that the vector field
$$\nabla_{\varphi}^{1,0} \big(R_{\varphi}- (1-t) \tr_{\varphi} \omega\big)$$
is holomorphic. The result we obtain within this framework states as follows.

\begin{thm}\label{thm1.2}
Let $M$ be a compact complex manifold, and let $\omega$ be a K\"ahler metric on $X$, whose class $[\omega]$ contains an extremal K\"ahler metric $\omega_{\varphi_0}$. Then
there exists $\epsilon > 0$ together with a smooth function $\phi: ]1-\epsilon, 1]\times M\to \bR$ such that 
$\varphi_t:= \phi(t, \cdot)\in \cH^\infty(M)$ 
and such that the corresponding metric 
$\omega_{\varphi_t}$ is a twisted extremal metric, i.e.
$$\nabla_{\varphi_t}^{1,0} \big(R_{\varphi_t} - (1-t) \tr_{\varphi_t} \omega\big)$$
is a holomorphic vector field.
\end{thm}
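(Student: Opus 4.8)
The plan is to run an implicit function theorem around the extremal metric $\omega_{\varphi_0}$, carried out entirely in the category of potentials invariant under a maximal compact group of automorphisms, in order to control the holomorphic vector fields that are unavoidable in the extremal setting. Let $G\subset\Aut_0(M)$ be a maximal compact subgroup sitting inside the isometry group of $\omega_{\varphi_0}$; such a $G$ exists by Calabi's theorem, and the extremal field $\nabla^{1,0}_{\varphi_0}R_{\varphi_0}$ generates a torus $T$ in the centre of $G$. I would work throughout in the $G$-invariant H\"older spaces $C^{k,\alpha}_G(M)$, on which $R_{\varphi_0}$ is itself $G$-invariant. Writing $\cD_\varphi f:=\bar\partial\,\nabla^{1,0}_\varphi f$ and $\cD^*_\varphi$ for its $L^2(\omega_\varphi)$-adjoint, and setting $s:=1-t$, the twisted extremal condition of the statement is precisely $\cD_\varphi\big(R_\varphi-s\,\tr_\varphi\omega\big)=0$, i.e. $R_\varphi-s\,\tr_\varphi\omega$ lies in the finite-dimensional space $\mathcal{K}_\varphi:=\Ker\cD^*_\varphi\cD_\varphi$ of holomorphy potentials. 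Denoting by $\Pi_\varphi$ the $L^2(\omega_\varphi)$-projection onto $\mathcal{K}_\varphi$, this is the single fourth-order equation
\[
\hat Q(s,\varphi):=\big(\id-\Pi_\varphi\big)\big(R_\varphi-s\,\tr_\varphi\omega\big)=0,
\]
and $\hat Q(0,\varphi_0)=(\id-\Pi_{\varphi_0})R_{\varphi_0}=0$ because $\omega_{\varphi_0}$ is extremal; I abbreviate $\Pi_{\varphi_0},\mathcal{K}_{\varphi_0}$ as $\Pi_0,\mathcal{K}_0$ below.

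The next step is to linearise $\hat Q$ in $\varphi$ at $(s,\varphi)=(0,\varphi_0)$. From the classical formula $DR_{\varphi_0}(\psi)=-\cD^*_0\cD_0\psi+\tfrac12\langle\nabla R_{\varphi_0},\nabla\psi\rangle$, and after accounting for the derivative of the projection $\Pi_\varphi$ (which is of lower order), the linearisation reduces modulo $\mathcal{K}_0$ to the modified Lichnerowicz operator $\cD^*_0\cD_0-\tfrac12\langle\nabla R_{\varphi_0},\nabla\cdot\rangle$. By the results of Calabi and LeBrun--Simanca this operator, restricted to $C^{k,\alpha}_G(M)$, is self-adjoint, elliptic and non-negative with kernel exactly $\mathcal{K}_0$; hence it is an isomorphism of $\mathcal{K}_0^\perp$ onto itself, while as a Fredholm map on all of $C^{k,\alpha}_G(M)$ it has kernel and cokernel both equal to $\mathcal{K}_0$. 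Restricting the unknown $\varphi$ to $\varphi_0+\mathcal{K}_0^\perp$ and composing $\hat Q$ with the projection $\id-\Pi_0$ onto $\mathcal{K}_0^\perp$ therefore produces a map with invertible $\varphi$-derivative, and the implicit function theorem yields a smooth family $\varphi(s)\in\varphi_0+\mathcal{K}_0^\perp$ solving $(\id-\Pi_0)\hat Q(s,\varphi(s))=0$ for $|s|$ small. Elliptic regularity then upgrades $\varphi(s)$ to a smooth potential, furnishing the candidate $\phi(t,\cdot)$.

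The genuine difficulty is that this only solves the equation modulo $\mathcal{K}_0$: the construction leaves a finite-dimensional residual $\Pi_0\hat Q(s,\varphi(s))\in\mathcal{K}_0$, and this obstruction --- coming directly from the holomorphic vector fields --- cannot be removed by any fourth-order perturbation of $\varphi$. I would absorb it using the one remaining source of freedom, namely the action of the complexified torus $T^{\bC}$ on potentials by $\varphi\mapsto$ (potential of $f^\star\omega_{\varphi_0}$), whose infinitesimal orbit at $\varphi_0$ is precisely $\mathcal{K}_0$. Adjoining the parameters of $T^{\bC}$ to the unknowns and projecting the full equation onto $\mathcal{K}_0$ reduces the obstruction to a finite-dimensional system, solvable by the inverse function theorem provided the pairing of the $T^{\bC}$-infinitesimal action against $\mathcal{K}_0$ is nondegenerate. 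Establishing this nondegeneracy --- equivalently, that the reduced Futaki-type obstruction varies invertibly along the $T^{\bC}$-orbit of $\omega_{\varphi_0}$, a point at which the strict positivity $\tr_\varphi\omega>0$ of the twisting term is decisive --- is the heart of the argument and the step I expect to be hardest. Once it is secured, replacing $\varphi(s)$ by its correction under the appropriate element of $T^{\bC}$ yields, for every small $s>0$, a genuine twisted extremal metric $\omega_{\varphi_t}$; letting $s\to0$ (that is, $t\to1$) returns an extremal metric in $[\omega]$, namely a pullback $f^\star\omega_{\varphi_0}$, which completes the construction in parallel with Theorem~\ref{thm1.1}.
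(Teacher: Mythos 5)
Your overall architecture matches the paper's: work in spaces of potentials invariant under a maximal compact $K\subset\Aut_0(M)$, reduce the twisted extremal condition to a scalar fourth--order equation, split off the finite--dimensional kernel $\mathcal K_0=\Ker\cD_{\varphi_0}$ of the Lichnerowicz operator, and solve the complementary part by the implicit function theorem. The gap is at the step you yourself flag as the hardest one, and it is not merely hard --- the mechanism you propose cannot work. You want to kill the residual $\Pi_0\hat Q(s,\varphi(s))\in\mathcal K_0$ by adjoining group parameters (your $T^{\bC}$) and applying the inverse function theorem, ``provided the pairing of the infinitesimal action against $\mathcal K_0$ is nondegenerate.'' But at $s=0$ \emph{every} metric in the $\Aut_0(M)$--orbit of $\omega_{\varphi_0}$ is extremal, so $\hat Q(0,\cdot)$ vanishes identically along the orbit; consequently the derivative of the residual in the group directions is identically zero at $s=0$ and the pairing you need is degenerate, not nondegenerate. (A secondary inaccuracy: the infinitesimal orbit of the torus generated by the extremal field is in general a proper subspace of $\mathcal K_0$; one needs the full normalizer $N_K/K$, cf.\ Proposition \ref{Ktangent}.) This identically--vanishing first derivative is exactly the phenomenon that forces the Bando--Mabuchi bifurcation trick, which is the heart of the paper's proof and is absent from yours.

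Concretely, the paper resolves the obstruction as follows. First it replaces $\omega_{\varphi_0}$ by the distinguished point $\omega_{\varphi_1}$ in its orbit at which the functional $\iota=J_\omega-nI$ restricted to $\Psi^\omega(\cO_K)$ attains its minimum; this guarantees $\tr_{\varphi_1}\omega-n\perp\Ker\cD_{\varphi_1}$ and fixes the holomorphic field $X_1=\nabla^{1,0}_{\varphi_1}R_{\varphi_1}$. Then, writing $P(u,t)$ for the projection of the equation onto $\mathcal H_{K,\varphi_1}$, one has $P(u,1)\equiv 0$, so one passes to $\widetilde P(u,t)=P(u,t)/(t-1)$ and shows that $\partial\widetilde P/\partial u|_{(0,1)}$ \emph{is} invertible: using the Leibniz--type identity of Lemma \ref{lem3} for $\cD_{\varphi_1}$ (which requires the delicate fact, proved in Proposition \ref{Ktangent}, that the relevant kernel elements lie in $\Ker\cD_{\varphi_1}\cap\Ker\bar\cD_{\varphi_1}$, since $\cD$ is not real for non--cscK metrics), the quadratic form $\int\frac{\partial\widetilde P}{\partial u}(v)\,v\,\omega_{\varphi_1}^n$ reduces to $\int v_{,\bar\alpha}v_{,\beta}\,\omega_{\alpha\bar\beta}\,\omega_{\varphi_1}^n\ge 0$, i.e.\ the Hessian of $\iota$ along the orbit at its minimum, which is strictly positive. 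Your proposal contains neither the choice of the minimizing point, nor the division by $(t-1)$, nor the Leibniz identity that makes the resulting derivative computable, so the finite--dimensional obstruction is not actually removed.
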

\medskip

\noindent  As a direct consequence of Theorem \ref{thm1.2} we obtain a new proof of the following statement.

\begin{cor}\label{cor1.4}
Let $(M, \omega)$ be a compact K\"ahler manifold. Given two extremal metrics 
$\displaystyle (\omega_{j})_{j= 1, 2} \subset [\omega]$ there exists a holomorphic automorphism $f$ of $M$ such that $f^\star \omega_2= \omega_1$.
\end{cor}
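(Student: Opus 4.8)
The plan is to deduce Corollary \ref{cor1.4} from Theorem \ref{thm1.2} by a connectedness/deformation argument that transports one extremal metric to the other via the twisted extremal equation. Given two extremal metrics $\omega_1, \omega_2 \in [\omega]$, I would apply Theorem \ref{thm1.2} to each of them separately. Taking $\omega = \omega_1$ as the reference metric and $\omega_{\varphi_0} = \omega_1$ itself (which is extremal in its own class), the theorem produces a family $\omega_{\varphi_t}$ of twisted extremal metrics for $t \in (1-\epsilon, 1]$ with $\omega_{\varphi_1} = \omega_1$. Doing the same with reference $\omega_2$ gives a second such family. The idea is that these twisted extremal families, parametrized by $t$, are the right objects to compare, since for $t$ slightly less than $1$ the twisting term $(1-t)\tr_\varphi \omega$ is a small perturbation whose solutions should be rigid up to automorphism.

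The key structural input is a uniqueness statement for twisted extremal metrics: I expect that for each fixed $t < 1$ sufficiently close to $1$, any two twisted extremal metrics in $[\omega]$ (solving the equation with the same parameter $t$) differ by a holomorphic automorphism of $M$. This is the twisted analogue of the classical uniqueness of extremal metrics, and because the twisting term $(1-t)\tr_\varphi\omega$ strictly convexifies the relevant functional (the modified $K$-energy), one should be able to run a convexity argument along geodesics in the space of K\"ahler potentials $\cH^\infty(M)$. Concretely, the presence of the strictly positive twisting form breaks the automorphism degeneracy enough that the twisted extremal metric is unique on the nose, or at worst unique modulo the reduced automorphism group; the modified energy is strictly convex along $C^{1,1}$ geodesics, forcing coincidence of its minimizers.

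The main obstacle will be organizing the limit $t \to 1$. Having obtained for each $t$ close to $1$ an automorphism $f_t$ identifying the two twisted extremal metrics with parameter $t$ arising from $\omega_1$ and $\omega_2$, I must show that the family $f_t$ converges (after passing to a subsequence) to a holomorphic automorphism $f$ of $M$ as $t \to 1$, and that in the limit $f^\star \omega_2 = \omega_1$. This requires a compactness argument for the automorphisms $f_t$ inside $\Aut(M)$, together with enough uniform control (elliptic estimates on the potentials $\varphi_t$ provided by the smoothness in $t$ asserted in Theorem \ref{thm1.2}) to guarantee that the limiting map is an honest biholomorphism rather than a degenerate limit. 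The smooth dependence on $t$ up to and including $t = 1$ furnished by the theorem is exactly what makes this passage to the limit feasible, and I would lean on the properness of the automorphism action and on the continuity of $t \mapsto \omega_{\varphi_t}$ to extract and identify the limit.
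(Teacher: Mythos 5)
Your overall strategy --- deform both extremal metrics into twisted extremal families, use strict convexity of a twisted energy to get uniqueness at each fixed $t<1$, and then pass to the limit $t\to 1$ --- is the right skeleton, but as written it has two genuine gaps, and the second one is precisely the difficulty the paper's argument is engineered to avoid.

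First, the two twisted families you produce do not a priori solve the \emph{same} equation. Theorem \ref{thm1.2} (via equation \eqref{0504}) produces a path solving $R_{\varphi_t}-\underline R-(1-t)(\tr_{\varphi_t}\omega-n)=\rho_{\varphi_t}(X_j)$, where $X_j=\nabla^{1,0}_{\varphi_j}R_{\varphi_j}$ is the extremal vector field of the metric you started from. If $X_1\neq X_2$, the two families are critical points of \emph{different} functionals and no convexity argument can compare them; the paper flags exactly this obstruction at the start of Section 4. The missing step is a normalization \emph{before} running the deformation: by Calabi's theorem the isometry groups $\Iso_0(M,\omega_j)$ are maximal compact subgroups of $\Aut_0(M)$, hence conjugate (Matsushima), so after applying an automorphism one may assume $\Iso_0(M,\omega_1)=\Iso_0(M,\omega_2)=K$; then Futaki--Mabuchi gives $X_1=X_2$. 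One must also work throughout with $K$-invariant potentials and $K$-invariant $C^{1,1}$ geodesics, both so that $\rho_\varphi(X_1)$ is real-valued and so that the modified $K$-energy $\cE_K$ is (weakly) convex; strictness then comes from adding $(1-t)\iota$. Your hedge that uniqueness holds ``on the nose, or at worst modulo the reduced automorphism group'' papers over this: with the normalization in place the $K$-invariant solution at fixed $t<1$ is unique \emph{exactly}, not merely up to automorphism.

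Second, and consequently, your proposed limit of automorphisms $f_t$ as $t\to 1$ is both unnecessary and problematic. In the paper's argument one additionally chooses each $\varphi_j$ to be the minimizer of $\iota$ on its orbit $\Psi^{\omega_j}(\cO_K)$, so that the two paths from Theorem \ref{thm4.1} coincide identically for $t$ near $1$; the conclusion $\varphi_1=\varphi_2$ then follows from continuity of the paths at $t=1$, with no family $f_t$ to control. If instead you keep a nontrivial family $f_t\in\Aut_0(M)$ and try to extract a limit, you face a real compactness problem: $\Aut_0(M)$ is noncompact whenever it is nontrivial, and nothing in your sketch rules out degeneration of $f_t$ as $t\to1$ (this is exactly the bifurcation phenomenon the Bando--Mabuchi method is designed to circumvent). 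So the argument as proposed does not close without the normalization step, and with it the limit argument you describe becomes superfluous.
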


\noindent The uniqueness problem above has long history which goes back to E. Calabi.  
Among the important articles generated by this question since then we refer to  
\cite{BM}, \cite{C}, \cite{CT}, \cite{D}, \cite{M2} as well as the recent paper \cite{BB}. 
To our knowledge, the fact that the original ideas of Bando-Mabuchi 
could be successfully used in order to establish the uniqueness of cscK/extremal metrics first appears in the paper of Berman-Berndsson cf. \cite{BB}; in addition they inject new convexity techniques in the field
(our arguments for the corollary above follow a similar approach).  

\medskip

\noindent For example, in the cscK case, the Corollary \ref{cor1.4} can be derived as follows.
We consider the one parameter family of twisted K-energy functional\footnote{For $s={1\over 2}$, this is already studied by a number of authors, \cite{JS}, \cite{MS} etc. }
\begin{align*}
\frac{\mathrm{d} E_{t}}{ \mathrm{d} s} = \int_M  \frac{\mathrm{d} \varphi}{\mathrm{d}s}(- t(R_{\varphi} -\underline{R}) + (1-t) (\tr_{\varphi} \omega - n)) \omega_{\varphi}^n,\qquad \forall \; t\in [0,1]
\end{align*}
and then we observe that the twisted cscK metrics are precisely the critical points of $E_t$
(up to a change of parameter). Next, we recall that thanks to \cite{C}, any two metrics in $[\omega]$ can be connected by a $C^{1,1}$ geodesic; on the other hand, if $t< 1$ then the functional $E_t$ above is \emph{strictly convex} along $C^{1,1}$ geodesics,
as a consequence of \cite{BB} and \cite{CLP}, together with the strict convexity of the $J$ functional
established in \cite{C2}. The convexity of the functional $E_1$ along smooth geodesics is due to T. Mabuchi;
the fact that this result still holds in the setting of $C^{1,1}$ geodesics is crucial for the proof.
By Theorem \ref{thm1.1}, we can deform the initial metrics $\omega_1$ and $\omega_2$ (modulo the action of a holomorphic automorphism of $M$) to twisted cscK metrics for which the 
corresponding parameter $t$ is strictly less than 1, so the corollary follows.

\medskip

\noindent In fact, the results 1.1 and 1.2 above represent the cscK version and the extremal version respectively of the Bando-Mabuchi work \cite{BM}.  
The proof we will present next is based on the \emph{bifurcation technique} developed in their celebrated
article \cite{BM} concerning the uniqueness up to biholomorphism of K\"ahler-Einstein metrics (see also Tian-Zhu \cite{TZ} for an analogous result in the context of K\"ahler-Ricci solitons). 
 Even if in our proof
(and in \cite{BB} likewise) one can easily recognize the main steps of 
the approach by Bando-Mabuchi, the techniques we had to develop/adapt in what follows are much more involved then the ones used in the K\"ahler-Einstein context. 
Moreover, the results above are important in their own right, because tightly connected with the program launched by E. Calabi.

\medskip

\noindent Another motivation of the present article arise from the following conjecture, cf. \cite{CP}.
\begin{conj}\label{con}
For any $\chi > 0$ and $\chi \in [\omega]$, if $(M, [\omega], J)$ is destabilized by $(M, [\omega], J')$ where the later admits a cscK metric, then for any $s<1$ but sufficiently close to 1 there exists a twisted cscK metric for the triple structure $([\omega], \chi , s)$.
\end{conj}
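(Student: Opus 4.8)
\medskip
\noindent The statement is a ``relative'' version of Theorem \ref{thm1.1}: that theorem is essentially the case $J'=J$ (with $\chi$ the reference form $\omega$), in which the destabilization is trivial and the cscK metric already lives on the manifold one starts from. The plan is therefore to upgrade the bifurcation argument behind Theorem \ref{thm1.1} to a family over the degeneration. First I would encode the destabilization as a test configuration, that is, a flat family $\pi\colon\sX\to\bD$ over the disc carrying a $\bC^\star$-action, a relatively K\"ahler class restricting to $[\omega]$ and a fibrewise positive twisting form restricting to $\chi$, whose central fibre is $(M,[\omega],J')$ and whose generic fibre is biholomorphic to $(M,[\omega],J)$. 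Since $(M,J')$ carries a cscK metric, Theorem \ref{thm1.1} (in its evident $\chi$-twisted form) furnishes, for every $s\in(1-\epsilon,1]$, a twisted cscK metric on the central fibre for the triple $([\omega],\chi,s)$; the goal becomes to continue these solutions to a generic fibre, where they become twisted cscK metrics on $(M,J)$ itself.

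\medskip
\noindent The engine is then a two--parameter implicit function argument in $(s,\tau)$, started at the corner $(1,0)$. For $\tau$ fixed and $s<1$ strictly, the fibrewise linearization of \eqref{tcscK} (with $\omega$ replaced by $\chi$) is coercive --- this is exactly the twisting--induced invertibility that powers the openness result of \cite{CP} --- so away from the central fibre one expects persistence of solutions by the same H\"older--space inversion used for Theorem \ref{thm1.1}. The real difficulty is concentrated at $\tau=0$: the $\bC^\star$-action enlarges $\Aut(M,J')$, the linearized operator develops a kernel spanned by the induced holomorphic vector fields, and one must, as in Bando--Mabuchi, pass to the quotient by this group and read off a finite--dimensional obstruction. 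I would expect that obstruction to be governed by the Donaldson--Futaki invariant of the configuration, and the destabilization hypothesis to be exactly the sign condition guaranteeing that the twisted solutions bifurcate \emph{into} the region $\tau\neq 0$ --- onto the generic fibre $(M,J)$ --- rather than remaining trapped on the central fibre.

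\medskip
\noindent The hard part, and the reason this is stated as a conjecture, is the quantitative control needed to reach a genuine generic fibre while keeping $s$ arbitrarily close to $1$. The coercivity constant of the fibrewise linearized operator degenerates as $s\uparrow 1$, while the complex structure degenerates as $\tau\to 0$; since $(M,J)$ is destabilized one cannot hope for uniform estimates at $s=1$, so the crux is to show that for $s$ strictly below $1$ the twisting term $(1-s)(\tr_\varphi\chi-\,\cdot\,)$ restores enough coercivity of the twisted K-energy from the discussion following Corollary \ref{cor1.4} to yield a uniform properness bound along the family as $\tau\to 0$. Establishing such a bound is a twisted, family version of the ``properness implies existence'' direction of the Yau--Tian--Donaldson correspondence, and I would expect it to require a quantitative lower bound for the Mabuchi functional dictated by the destabilizing configuration; this last input is the genuinely open step.
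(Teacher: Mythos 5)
You have set out to prove a statement that the paper itself does not prove: this is Conjecture \ref{con}, and the authors say explicitly that they cannot yet solve it (``Even though we cannot solve this conjecture yet, our main results here represent a slightly weaker existence result\dots''). The theorems actually established in the paper (Theorems \ref{thm1.1} and \ref{thm1.2}) treat only the case where the cscK (or extremal) metric lives on $(M,J)$ itself, so that the deformation in $t$ never has to leave the fibre; the whole content of the conjecture is the passage from a cscK metric on the destabilizing structure $(M,[\omega],J')$ to twisted solutions on $(M,[\omega],J)$, and that passage is exactly what is missing from both the paper and your proposal.

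To your credit, you identify the right skeleton (test configuration, twisted deformation on the central fibre via the analogue of Theorem \ref{thm1.1}, two-parameter bifurcation in $(s,\tau)$) and you are honest that the decisive step is open. But as written this is a research program, not a proof, and the gap is not a technicality: at $\tau=0$ the linearized operator acquires a kernel coming from the $\bC^\star$-action and the enlarged automorphism group of $(M,J')$, and nothing in your argument shows that the finite-dimensional bifurcation equation has solutions with $\tau\neq 0$ rather than solutions confined to the central fibre. Your appeal to ``the sign of the Donaldson--Futaki invariant'' as the mechanism forcing solutions off the central fibre is a heuristic with no computation behind it, and the uniform properness/coercivity bound for the twisted K-energy along the degenerating family as $s\uparrow 1$ and $\tau\to 0$ --- which you correctly flag as the crux --- is precisely a twisted, family version of the existence direction of the Yau--Tian--Donaldson correspondence. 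Naming the missing estimate is not the same as supplying it, so the statement remains, as in the paper, a conjecture.
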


\noindent This conjecture would give a new criteria for deciding whether a class $(M, [\omega], J)$ is 
semi-stable or not. If Conjecture \ref{con} holds, then given a semi-stable manifold, one would be able to find a continuous family of twisted cscK metrics for any $1-\varepsilon \leq t <1$. In principle, this should be sufficient for many geometric applications. Also, we remark that this is perfectly analog to the
classical K\"ahler-Einstein equation: along the continuity path, one can solve the Monge-Amp\`ere equation provided that 
the condition of K-semistability is satisfied. The conjecture above essentially states that the same phenomenon should occur in the context of the cscK metrics. Even though we cannot solve this conjecture yet, our main results here represent a slightly weaker existence result, which will hopefully lead to the solution of the conjecture itself in a near future.
\medskip

This article is organized as follows. In the first part we define a map between the Lie group of holomorphic automorphisms of $M$, and the space of normalized K\"ahler potentials. In the K\"ahler-Einstein setting,
the image of the differential of this map was computed by Bando-Mabuchi, cf. \cite{BM}, paragraph \S 6. 
Here we obtain an analog result, first for orbits corresponding to cscK 
metrics (cf. Proposition \ref{tangent}), and then in general, for orbits of extremal metrics in section 4. We remark 
that the case of extremal metrics is quite delicate, basically because of the fact that the Lichnerowicz operator (denoted by $\cD$ here) is not real in general. An important ingredient of the proof is a result due to Calabi, concerning the 
structure of the algebra of holomorphic vector fields on K\"ahler manifolds admitting an extremal metric.
We equally establish a Leibniz-type identity 
for the operator $\cD$; it is an elementary result, modulo the computations in the proof which are really involved. 
The complete arguments for our main results (i.e. the two theorems stated above) are given in sections three and four, respectively. We are using a version of 
the implicit function theorem, again modeled after \cite{BM}, but with many additional difficulties along the way. 
 

\section{Preliminaries}\label{sect2}

Let $(M, \omega)$ be a compact K\"ahler manifold of dimension $n$; we denote by $[\omega]\in H^{1,1}(X, \bR)$ its corresponding cohomology class. We denote by $\Aut_0(M)$ the connected component of the Lie group of holomorphic automorphisms of $M$ containing the identity map, and let 
$\Iso_0(M, \omega)\subset \Aut_0(M)$ be the group of holomorphic isometries of $(M, \omega)$. 
\smallskip

\noindent It is well known that the quotient 
\begin{equation}\label{equa1}
\mathcal{O} = \Aut_0(M)/\Iso_0(M, \omega)
\end{equation}
is a homogeneous manifold. Let $\Gamma\subset H^0(M, T_M)$ be the vector space of holomorphic vector fields $X$ such that the Lie derivative $\cL_X\omega$ vanishes. Then the tangent space of $\cO$ is expressed as follows
\begin{equation}\label{equa2} 
T_{\cO}\simeq H^0(M, T_M)/\Gamma.
\end{equation}
\medskip

\noindent Along the next lines we will construct an embedding of $\cO$ into the space of potentials of the K\"ahler metric $\omega$, and we will identify the image of the corresponding tangent space.

Let $g\in \Aut_0(M)$ be a holomorphic automorphism of $M$. Then we have $g^\star\omega \in [\omega]$, so 
that there exists a real-valued function $\varphi\in \cC^\infty(M)$ such that
\begin{equation}\label{equa3} 
g^\star\omega= \omega_\varphi:= \omega+ \sqrt{-1}\ddbar \varphi.
\end{equation}
The function $\varphi$ is unique up to normalization; we introduce the normalized space of 
K\"ahler potentials
\begin{equation}\label{equa4} 
\wt \cH:= \{\varphi \in \cC^\infty(M) : \omega_\varphi> 0 \hbox{ and } \int_M \varphi\omega^n= 0\}.
\end{equation}
Then we have a well-defined map $\Psi^\omega: \Aut_0(M)\to \wt H$, such that $\Psi^\omega(g):= \varphi$ where the function 
$\varphi$ is uniquely defined by \eqref{equa3} together with the normalization in the definition of 
$\wt \cH$ in 
\eqref{equa4}. Moreover, we have $\Psi^\omega(g)= 0$ for any $g\in \Iso(M, \omega)$, and thus we obtain a map
\begin{equation}\label{equa5}
\Psi^\omega: \cO\to \wt H.
\end{equation}
\medskip

\noindent If the metric $\omega$ is cscK, then we can describe the image $\displaystyle \Psi^\omega_\star(T_{\cO, g})$
in a very simple manner, as follows.

\begin{prop}\label{tangent}
Let $(M, \omega)$ be a compact K\"ahler manifold, such that $\omega$ is cscK, and let $g\in \Aut_0(M)$ be an automorphism. Then the image of the tangent space $\displaystyle \Psi_\star^\omega(T_{\cO, g})$ coincides with 
the space generated by the real-valued functions $f\in \cC^\infty(M)$, such that $\nabla^{1,0}_\varphi f$ is holomorphic,
where $\varphi:= \Psi^\omega(g)$. In addition, the imaginary part of the vector field $\nabla^{1,0}_\varphi f$ is Killing with respect to the metric $\omega_\varphi$.
\end{prop}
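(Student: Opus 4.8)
The plan is to reduce everything to the metric $\omega_\varphi := g^\star\omega$, where $\varphi=\Psi^\omega(g)$. Since scalar curvature is a biholomorphic invariant, $\omega_\varphi$ is again cscK with the same average $\underline R$, so it suffices to carry out the analysis at $\omega_\varphi$. First I would compute the differential $\Psi^\omega_\star$ at $g$. Identifying a tangent vector at $g$ with a \emph{real holomorphic} vector field $\xi$ (an element of the real Lie algebra of $\Aut_0(M)$, which is real-isomorphic to $H^0(M,T_M)$ via $\xi\mapsto\xi^{1,0}$) through the right-translated flow $g_s:=g\circ\phi_s$, $\phi_s=\exp(s\xi)$, one gets $g_s^\star\omega=\phi_s^\star\omega_\varphi$, hence
\[
\sqrt{-1}\ddbar\dot\varphi = \cL_\xi\omega_\varphi, \qquad \dot\varphi := \tfrac{d}{ds}\Big|_{s=0}\Psi^\omega(g_s),
\]
with $\dot\varphi$ real (as $\xi$ and $\omega_\varphi$ are) and normalized by $\int_M\dot\varphi\,\omega^n=0$. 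The kernel of $\xi\mapsto\dot\varphi$ is $\{\xi:\cL_\xi\omega_\varphi=0\}$, which under conjugation by $g$ corresponds to $\Gamma$, so this realizes $T_{\cO,g}\simeq H^0(M,T_M)/\Gamma$ and exhibits $\Psi^\omega_\star(T_{\cO,g})$ as a linear subspace of the space of normalized potentials.

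For the inclusion $\supseteq$ I would use the elementary identity $\iota_{\nabla^{1,0}_\varphi h}\omega_\varphi=\sqrt{-1}\dbar h$, valid for \emph{any} function $h$, which upon applying $d$ gives $\cL_{\nabla^{1,0}_\varphi h}\omega_\varphi=\sqrt{-1}\ddbar h$. Hence, given a real $f$ with $\nabla^{1,0}_\varphi f$ holomorphic, the real field $\xi:=\nabla^{1,0}_\varphi f+\overline{\nabla^{1,0}_\varphi f}$ is real holomorphic and satisfies $\cL_\xi\omega_\varphi=\sqrt{-1}\ddbar(f+\bar f)=\sqrt{-1}\ddbar(2f)$, so $f\in\Psi^\omega_\star(T_{\cO,g})$.

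The substantial inclusion is $\subseteq$, and here the cscK hypothesis is essential. Given a tangent direction $\xi$ with associated real $\dot\varphi$, I would exploit the naturality of scalar curvature: since $\phi_s$ is a biholomorphism, $R(\phi_s^\star\omega_\varphi)=R_\varphi\circ\phi_s$. Writing $\phi_s^\star\omega_\varphi=\omega_\varphi+\sqrt{-1}\ddbar u_s$ (possible since $\phi_s\in\Aut_0(M)$ fixes $[\omega]$) with $\dot u_0=\dot\varphi$, and differentiating at $s=0$, the left side produces $L_\varphi\dot\varphi$, the linearization of scalar curvature in the $\sqrt{-1}\ddbar$–direction, while the right side produces $\xi(R_\varphi)$, so that $L_\varphi\dot\varphi=\xi(R_\varphi)$. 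I then invoke the standard formula $L_\varphi\psi=-\cD^*\cD\psi+\tfrac12\langle\nabla R_\varphi,\nabla\psi\rangle_\varphi$. As $R_\varphi$ is constant, the gradient term drops and $\xi(R_\varphi)=0$, whence $\cD^*\cD\dot\varphi=0$ and therefore $\int_M|\cD\dot\varphi|^2\,\omega_\varphi^n=\int_M\langle\cD^*\cD\dot\varphi,\dot\varphi\rangle\,\omega_\varphi^n=0$, giving $\cD\dot\varphi=0$, i.e. $\nabla^{1,0}_\varphi\dot\varphi$ holomorphic. This is exactly the crux: the \emph{two} uses of the cscK condition — that $\xi(R_\varphi)=0$ and that $L_\varphi$ reduces to $-\cD^*\cD$ — are what force the image into $\ker\cD$. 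For non-constant $R_\varphi$ (the extremal case of Section 4) both terms survive, which is precisely the source of the delicacy noted in the introduction.

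For the additional Killing statement, for real $f$ with $\nabla^{1,0}_\varphi f$ holomorphic the imaginary part of $\nabla^{1,0}_\varphi f$ is, up to a constant factor, the Hamiltonian field $J\nabla f$. It preserves $\omega_\varphi$ because $\iota_{J\nabla f}\omega_\varphi=-df$ is closed, so $\cL_{J\nabla f}\omega_\varphi=d(-df)=0$; and it preserves $J$ because holomorphy of $\nabla^{1,0}_\varphi f$ is equivalent to $J\nabla f$ being real holomorphic. Preserving both $\omega_\varphi$ and $J$, it preserves $g_\varphi=\omega_\varphi(\cdot,J\cdot)$ and is therefore Killing for $\omega_\varphi$.
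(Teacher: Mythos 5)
Your proof is correct, but the decisive step is carried out by a genuinely different mechanism than in the paper. For the inclusion $\Psi^\omega_\star(T_{\cO,g})\subseteq\{f \hbox{ real}: \nabla^{1,0}_\varphi f \hbox{ holomorphic}\}$, the paper invokes Calabi's structure theorem for $H^0(M,T_M)$ on a cscK manifold, writing $X=X_a+\nabla^{1,0}_{\varphi}(f+\sqrt{-1}h)$ with $f,h$ real and of holomorphic gradient, and then computes $\cL_{X_{\bR}}\omega_{\varphi}=\sqrt{-1}\ddbar f$, so that $\dot\varphi$ is identified with the real part $f$ of the Calabi decomposition. You instead bypass the structure theorem entirely: from the equivariance $R(\phi_s^\star\omega_\varphi)=R_\varphi\circ\phi_s$ and the linearization formula for the scalar curvature map you deduce $\cD_\varphi\dot\varphi=0$ directly (both correction terms vanishing precisely because $R_\varphi$ is constant), and then integration by parts gives holomorphy of $\nabla^{1,0}_\varphi\dot\varphi$. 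Your route is more self-contained — it needs only the standard linearization of $\varphi\mapsto R_\varphi$ and the characterization $\Ker\cD_\varphi=\{f:\nabla^{1,0}_\varphi f \hbox{ holomorphic}\}$ already recorded in Lemma 2.3 — and it isolates exactly where cscK enters, which is a useful remark in view of the extremal case treated later (there the two surviving terms are what make Proposition 4.3 delicate). What the paper's route buys is the explicit identification of $\dot\varphi$ with the real part of the Calabi decomposition of $X$, which is the form reused verbatim in the $K$-invariant refinement of Section 4; your argument yields membership in $\Ker\cD_\varphi$ without tracking which holomorphic field produced it. Your treatment of the reverse inclusion (via $\iota_{\nabla^{1,0}_\varphi h}\omega_\varphi=\sqrt{-1}\dbar h$) and of the Killing statement (the imaginary part is the Hamiltonian field of $f$, preserving both $\omega_\varphi$ and $J$) is correct and in fact supplies details the paper explicitly declines to give.
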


\begin{proof}
To start with, let $g_t$ be a smooth path in $\cO$, such that 
$g_0= g$ and such that the derivative $\displaystyle \frac{dg_t}{dt}\big|_{t=0}$ identifies with a holomorphic vector field which we denote by $X$. There exists a smooth family $(\varphi_t)\subset \wt \cH$ with $\varphi_0 = \varphi$ so that we have 
\begin{equation}\label{equa6}
g_t^\star\omega:= \omega_{\varphi_t}
\end{equation}
for any parameter $t$. Since $\omega$ is cscK, so is $\displaystyle \omega_{\varphi_0}$. By \cite{Ca2}, we can decompose the holomorphic vector X as
\begin{align}
X = X_{a} + \nabla_{\varphi_0}^{1,0} ( f + \sqrt{-1}g),
\end{align}
where  $X_{a}$ is the autoparallel component of X, $f$ and $g$ are real-valued functions such that $\nabla_{\varphi_0}^{1,0} f$ and $\nabla_{\varphi_0}^{1,0}g$ are holomorphic (notice that here we are using the cscK condition)
We differentiate the relation \eqref{equa6} at $t=0$ and we obtain
\begin{equation}\label{equa7}
\sqrt{-1}\ddbar\dot\varphi_0 = \cL_{X_\bR}\omega_{\varphi_0} = \frac{1}{2} (\cL_{X} \omega_{\varphi_0} + \cL_{\bar X} \omega_{\varphi_0}) = \sqrt{-1} \partial \bar \partial f.
\end{equation}

\noindent  Thus, we know that $\dot \varphi_0 = f - \int f \omega^n$. The fact that the imaginary part of the holomorphic vector field 
$\displaystyle \nabla^{1,0}_{\varphi_0}\dot\varphi_0$ is Killing can be seen as a consequence of the fact that the function $\dot\varphi_0$ is real-valued, so we will not detail this point any further.
\end{proof}
\medskip

The next paragraph of this section is crucial: among the $\omega$--potentials belonging to the image 
$\Psi^\omega(\cO)$, we have to choose one which will enable us later to use the implicit function theorem in the proof of our main results. This is completely analogue to the paragraph \S 6 in \cite{BM}.

For any positive $(1,1)$-form $\chi$,  the $J_\chi$ functional introduced in \cite{C2} is defined as follows
\[
  {{\mathrm{d}  J_\chi}\over {\mathrm{d} t}} =  \int_M \tr_\varphi \chi  {{\mathrm{d} \varphi}\over {\mathrm{d} t}}  \frac{\omega_\varphi^n}{n!},\qquad \forall \varphi \in \mathcal{H}^{\infty}(M).
\]

We consider the functional $\iota:=  J_\omega- nI$, and by a direct computation, we obtain

\begin{equation}\label{grad}
\frac{\mathrm{d}}{\mathrm{d} t} \iota(\varphi_t) = \int_M (\text{tr}_{\varphi_t} \omega - n) \dot{\varphi_t} \frac{\omega_{\varphi_t}^n}{n!}.
\end{equation}
as well as 

\begin{align*}
\frac{\mathrm{d}^2 }{\mathrm{d}t^2} \iota(\varphi_t)= \int ( \ddot{\varphi} - |\nabla \dot{\varphi}|_{\varphi_t}^2) (\text{tr}_{\varphi_t}\omega -n) \omega_{\varphi_t}^n + \int \dot{\varphi}_{,\alpha}\dot{\varphi}_{,\bar{\beta}} \omega_{\bar{\alpha} \beta} \omega_{\varphi_t}^n >0.
\end{align*} 
so that in particular the functional $\iota$ is strictly convex along smooth geodesics.
\smallskip

\noindent As a consequence, we infer the following result, corresponding to \cite{BM}, Lemma 6.2.
\begin{lem}\label{min}
The functional $\iota|_{\Psi^\omega(\cO)}$ to is proper, and the minimum point of this restriction is unique.
\end{lem}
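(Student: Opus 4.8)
The plan is to transport the problem onto the finite-dimensional homogeneous space $\cO$ and to exploit the convexity of $\iota$ that was just recorded. First I would note that, since $\omega$ is cscK, the theorem of Matsushima--Lichnerowicz makes $\Aut_0(M)$ reductive, with $\Iso_0(M,\omega)$ a maximal compact subgroup; hence $\cO=\Aut_0(M)/\Iso_0(M,\omega)$ is a simply connected symmetric space of non-compact type, diffeomorphic through the exponential map to $\mathfrak p:=\sqrt{-1}\,\mathcal K$, where $\mathcal K$ is the Lie algebra of $\Iso_0(M,\omega)$. In particular $\cO$ is a Hadamard manifold: any two of its points are joined by a unique geodesic, necessarily of the form $s\mapsto\exp(sX)\cdot\Iso_0(M,\omega)$ with $X\in\mathfrak p$. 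Under the identification of Proposition \ref{tangent}, a direction $X\in\mathfrak p$ is exactly a gradient-holomorphic field $\nabla^{1,0}_\varphi f$ with $f$ real-valued, so the tangent directions of $\cO$ match the objects appearing there.

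Next I would show that $\Psi^\omega$ carries geodesics of $\cO$ to Mabuchi geodesics in $\wt\cH$. Writing $\rho_s$ for the flow generated by $X=\nabla^{1,0}f$ and $\varphi_s:=\Psi^\omega(\exp(sX)g)$, differentiating $\omega_{\varphi_s}=\rho_s^\star\omega$ gives $\dot\varphi_s=f\circ\rho_s$ up to the normalizing constant, exactly as in \eqref{equa7}. A direct computation then yields $\ddot\varphi_s=|\nabla f|^2_\omega\circ\rho_s$, while the naturality of the gradient under the biholomorphism $\rho_s$ gives $|\nabla_{\varphi_s}\dot\varphi_s|^2_{\varphi_s}=|\nabla_\omega f|^2_\omega\circ\rho_s$; hence $\ddot\varphi_s-|\nabla\dot\varphi_s|^2_{\varphi_s}=0$, the geodesic equation. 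Thus $\Psi^\omega(\cO)$ is swept out by Mabuchi geodesics, and along such a curve the second-variation formula recorded above loses its first term and reduces to the manifestly nonnegative $\int\dot\varphi_{,\alpha}\dot\varphi_{,\bar\beta}\,\omega_{\bar\alpha\beta}\,\omega_{\varphi_s}^n$, which is strictly positive unless $\partial\dot\varphi_s\equiv0$, i.e. unless the geodesic is constant. Consequently $\iota|_{\Psi^\omega(\cO)}$ is \emph{strictly} convex along every nonconstant geodesic of $\cO$.

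Uniqueness of the minimizer is then immediate: on a Hadamard manifold a function strictly convex along geodesics has at most one critical point, and any critical point is its unique global minimum. Indeed, two minimizers would be joined by a geodesic along which $\iota$ is strictly convex yet has two critical points, which is impossible. The same computation shows that a minimizer $\varphi$ is characterized by the Euler--Lagrange condition $\int_M(\tr_\varphi\omega-n)\,f\,\omega_\varphi^n=0$ for all real $f$ with $\nabla^{1,0}_\varphi f$ holomorphic, which is precisely the orthogonality that will later feed the implicit function theorem.

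The properness is the real difficulty, and where I expect the main obstacle to lie. By the previous step it suffices to prove that $\iota(\gamma_X(s))\to+\infty$ as $s\to\pm\infty$ along every unit-speed geodesic ray $\gamma_X$, with a rate uniform over $X$ in the (compact) unit sphere of the finite-dimensional space $\mathfrak p$. Along a nonconstant ray, convexity makes $\frac{d}{ds}\iota(\gamma_X(s))$ strictly increasing, so were it to stay bounded it would converge to a finite limit and force the second variation $Q(s)=\int\dot\varphi_{,\alpha}\dot\varphi_{,\bar\beta}\,\omega_{\bar\alpha\beta}\,\omega_{\varphi_s}^n$ to tend to $0$. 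The heart of the matter is thus to rule this out, i.e. to establish $\liminf_{s\to\infty}Q(s)>0$; this demands controlling the degenerating family $\omega_{\varphi_s}=\rho_s^\star\omega$ and the weighted Dirichlet energy of $f\circ\rho_s$ measured against the fixed form $\omega$, quantifying that a nontrivial one-parameter subgroup genuinely moves the metric. I would then upgrade this pointwise-in-direction growth to uniform coercivity by a compactness argument on the unit sphere of $\mathfrak p$, together with lower semicontinuity of the asymptotic slope $X\mapsto\lim_{s\to\infty}\frac{d}{ds}\iota(\gamma_X(s))$.
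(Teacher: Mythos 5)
Your uniqueness argument is sound and is essentially the paper's: the one-parameter orbits $s\mapsto\Psi^\omega(\exp(sX_\bR)g)$ are smooth Mabuchi geodesics (this is exactly the result of Mabuchi that the paper invokes), and a function strictly convex along the geodesics of a complete, geodesically connected finite-dimensional orbit has at most one critical point. The problem is the properness, which you correctly identify as ``the real difficulty'' but then do not prove. Your reduction asks for $\liminf_{s\to\infty}Q(s)>0$ for the second variation along the degenerating family $\rho_s^\star\omega$, and this is left entirely open; moreover the reduction itself is shaky (a bounded increasing $\frac{d}{ds}\iota$ only forces $\liminf_s Q(s)=0$ via integrability, not $Q(s)\to0$), and the ``lower semicontinuity of the asymptotic slope'' needed to pass from ray-by-ray divergence to uniform coercivity is also unestablished. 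As written, the proposal shows that a minimizer is unique if it exists, but not that it exists.

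The missing idea is that you never need to control $Q(s)$ at infinity, because the base point of the orbit is already a \emph{global} minimum of $\iota$ on all of $\cH^\infty(M)$. Indeed $0=\Psi^\omega(\id)$ lies in $\Psi^\omega(\cO)$, and the explicit formula $\iota(\varphi)=(J_\omega-nI)(\varphi)=\sum_{i=0}^{n-1}\frac{n-i}{n+1}\int_M\sqrt{-1}\,\partial\varphi\wedge\dbar\varphi\wedge\omega^i\wedge\omega_\varphi^{n-1-i}\ge 0=\iota(0)$ shows that $\frac{d}{ds}\iota(\gamma_X(s))$ vanishes at $s=0$ for every unit-speed geodesic ray $\gamma_X$ issued from the base point. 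Strict convexity then makes this derivative strictly positive at $s=\delta$, and since it depends continuously on $X$ ranging over the compact unit sphere of the finite-dimensional tangent space $T\cO$, there is a uniform $\epsilon>0$ with $\frac{d}{ds}\iota(\gamma_X(s))\ge\epsilon$ for all $s\ge\delta$ and all unit $X$. Hence $\iota(\gamma_X(s))\ge\epsilon(s-\delta)$, i.e.\ $\iota$ grows at least linearly in the distance on the complete homogeneous manifold $\cO$, whose closed bounded sets are compact; properness and the existence of the minimizer follow. (In the later extremal/$K$-invariant setting one compares $J_\chi-\underline\chi I$ with $J_{\omega_\psi}-nI$ for a metric $\omega_\psi$ on the orbit, the two differing by a bounded amount since $\chi$ and $\omega_\psi$ are cohomologous; the same scheme then applies.) This replacement of your asymptotic second-variation bound by the vanishing of the first variation at the base point is the step your proposal is missing.
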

\begin{proof}
Let $X$ be a holomorphic vector field, such that $\displaystyle X\in \Psi^{\omega}_\star(T_{\cO, g})$; by a result due to 
T.~Mabuchi (cf. \cite{M}, page 238), if we define 
\begin{equation}\label{equa10}
g_t:= \exp(tX_{\bR})
\end{equation}
where $X_{\bR}$ is the real part of the vector $X$, then the map 
\begin{equation}\label{equa11}
t\to \Psi^\omega(g_t)
\end{equation}
is a smooth geodesic. Therefore, our statement is a consequence of the strict convexity properties of the functional $\iota$, combined with Proposition \ref{tangent}. 
\end{proof}
\medskip

\noindent We recall a few notations and results taken from \cite{Ca2}. Let $f$ be a smooth function on $M$;
we define
\begin{equation}\label{equa12}
Lf:= \dbar \nabla^{1,0}_\varphi f
\end{equation}
which written in coordinates gives
\begin{equation}\label{equa13}
Lf= \frac{\partial}{\partial z^{\ol \beta}}\Big(g^{\alpha\ol\mu}\frac{\partial f}{\partial z^{\ol \mu}}\Big)
\frac{\partial}{\partial z^{\alpha}}\otimes dz^{\ol \beta},
\end{equation}
and let $L^\star$ be the adjoint operator. Let $\cD_\varphi:=L^\star L $ be the Lichnerowicz operator. 
Then $\cD_\varphi$ is a self-adjoint
elliptic operator on the space of smooth complex functions of $M$, which can be written as
\begin{equation}\label{equa14}
\cD_\varphi f= \Delta_\varphi^2 f+ \langle \sqrt{-1}\ddbar f, \Ric_{\varphi}\rangle_{\omega_\varphi} + 
\langle \partial R_{\varphi} , \dbar f \rangle_{\omega_\varphi}
\end{equation}
where $\Ric_{\varphi}$ denotes the Ricci curvature of the metric $\omega_\varphi$, and $R_\varphi$ is its trace, namely the scalar curvature. Also, $\Delta_\varphi$ is the Laplace operator corresponding to $\omega_{\varphi}$.
\smallskip

\noindent Notice that if the metric $\omega_{\varphi}$ is cscK, the $\cD_{\varphi}$ is real, self-adjoint
operator. And we have the following result, consequence of the general elliptic theory.

\begin{lem}\label{elliptic} 
The operator $\cD_\varphi: \cC^\infty(M)\to \cC^\infty(M)$ has the following properties.
\begin{enumerate}

\item[{\rm (1)}] Its kernel coincides with the subspace of functions $f$ such that $\nabla^{1,0}_\varphi f$ is holomorphic.
\item[{\rm (2)}] The image $\cD_\varphi\big(\cC^\infty(M)\big)\subset \cC^\infty(M)$ is closed, and we have the orthogonal decomposition
\begin{equation}\label{equa15}
\cC^\infty(M)= \cD_\varphi\big(\cC^\infty(M)\big)\oplus \Ker(\cD_\varphi).
\end{equation}
\end{enumerate}
\end{lem}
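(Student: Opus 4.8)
The plan is to treat the two assertions separately, using the factorization $\cD_\varphi = L^\star L$ for part (1) and the standard elliptic package for part (2).

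For part (1) I would exploit the non-negativity built into the definition $\cD_\varphi := L^\star L$. Since $M$ is compact, integration by parts is valid with no boundary contribution, so for every $f \in \cC^\infty(M)$ one has $\langle \cD_\varphi f, f\rangle_{L^2} = \langle L^\star L f, f\rangle_{L^2} = \|Lf\|_{L^2}^2$. Hence $\cD_\varphi f = 0$ forces $Lf = 0$, and the converse is immediate. By the definition \eqref{equa12} of $L$, the equation $Lf = 0$ reads $\dbar \nabla^{1,0}_\varphi f = 0$, i.e. the field $\nabla^{1,0}_\varphi f$ is holomorphic. This identifies $\Ker \cD_\varphi$ with the stated space.

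For part (2) the first step is to record that $\cD_\varphi$ is a self-adjoint, fourth-order \emph{elliptic} operator. Self-adjointness (with respect to the Hermitian $L^2$ pairing) is automatic from the shape $L^\star L$, since $\langle L^\star L f, h\rangle = \langle Lf, Lh\rangle = \langle f, L^\star L h\rangle$. Ellipticity is read off from \eqref{equa14}: the top-order term is $\Delta_\varphi^2$, whose principal symbol at a nonzero covector $\xi$ is $|\xi|_\varphi^4 \neq 0$, while the remaining terms are of order at most two and do not alter the symbol. The second step is to invoke the Fredholm theory for self-adjoint elliptic operators on a compact manifold. Viewing $\cD_\varphi : H^{s+4}(M) \to H^s(M)$, the operator has a finite-dimensional kernel consisting of smooth functions (by elliptic regularity) and a closed image, and the $L^2$-pairing gives $H^s = \cD_\varphi(H^{s+4}) \oplus \Ker(\cD_\varphi^\star)$; self-adjointness yields $\Ker \cD_\varphi^\star = \Ker \cD_\varphi$.

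Finally, to descend to $\cC^\infty$, I would take $h \in \cC^\infty(M)$ and decompose it $L^2$-orthogonally as $h = h_0 + h_1$ with $h_0 \in \Ker \cD_\varphi$ and $h_1 \perp \Ker \cD_\varphi$; both pieces are smooth because $\Ker \cD_\varphi$ is spanned by smooth functions. Then $h_1$ lies in the closed image, so $h_1 = \cD_\varphi u$ for some $u \in H^{s+4}$, and elliptic regularity upgrades $u$ to a smooth function since $h_1$ is smooth. Thus $\cD_\varphi(\cC^\infty(M))$ coincides with the set of smooth functions $L^2$-orthogonal to $\Ker \cD_\varphi$; being the smooth part of an $L^2$-closed subspace it is closed, and this yields the decomposition \eqref{equa15}. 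The only point requiring genuine care is this regularity bootstrap together with the closed-range property; everything else is a routine application of the elliptic theory, which is exactly why the lemma is advertised as a consequence of general elliptic theory.
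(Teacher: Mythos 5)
Your proposal is correct and follows exactly the route the paper indicates: part (1) via integration by parts on the compact manifold $M$ using the factorization $\cD_\varphi=L^\star L$, and part (2) via the standard elliptic/Fredholm package (closed range, finite-dimensional smooth kernel, regularity bootstrap), which is precisely what the paper delegates to Sobolev and G\aa rding estimates \`a la H\"ormander. You have simply written out the details the authors leave implicit; nothing is missing.
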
 
The point (1) is due to the compactness of $M$; the fact that the image of $\cD_\varphi$ is closed follows from Sobolev and G\"arding results for which we refer to
L.~H\"ormander \cite{Hor}.


\medskip

\noindent For any function $\varphi \in \mathcal{H}^{\infty}(M)$, we define a bilinear operator
$\displaystyle B_\varphi(\cdot, \cdot)$ acting on $u, v\in C^{\infty}(M)$ as follows
\begin{align*}\label{eqn10}
B_{\varphi}(u, v) &:= \langle  \partial \bar{\partial} v , \partial \bar{\partial}\Delta_{\varphi} u \rangle_{\varphi} + \Delta_{\varphi}\langle  \partial \bar{\partial} v , \partial \bar{\partial}u\rangle_{\varphi} +\langle  \partial \bar{\partial} \Delta_{\varphi} v , \partial \bar{\partial}u\rangle_{\varphi}\\
&+u_{,\bar{\alpha}p} v_{,\beta \bar{p}} (\Ric_{\varphi})_{\alpha\bar{\beta}} 
+u_{,\bar{p}\beta} v_{,p\bar{\alpha}} (\Ric_{\varphi})_{\alpha\bar{\beta}}.
\end{align*}

\noindent We end this introductory paragraph with the following statement, which will play an important role in 
our proof. In ordre to simplify the writing, we will use the following notation
\begin{equation}\label{equa40}
\langle \partial v_1, \ol\partial v_2\rangle_\varphi= \sum_{\alpha, \beta}\frac{\partial v_1}{\partial z_\alpha} 
\frac{\partial v_2}{\partial z_{\ol\beta}}g^{\alpha\ol\beta} 
\end{equation}
where $v_1, v_2$ are smooth functions on $M$. We have the next technical statement.

\begin{lem}\label{lem3} Let $\omega_\varphi\in [\omega]$ be an extremal metric, and
let $v, \xi$ be real-valued two smooth functions, such that $\cD_\varphi v = \bar \cD_{\varphi} v = 0$; then we have the next identity.
\begin{equation}\label{equa41}
\cD_\varphi\langle \partial v, \bar \partial \xi\rangle_\varphi = \langle \partial v, \dbar \cD_\varphi \xi\rangle_\varphi+ 
B_\varphi(v, \xi).
\end{equation}
\end{lem}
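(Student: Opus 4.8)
The plan is to establish the Leibniz-type identity \eqref{equa41} by direct computation in normal coordinates, exploiting the hypothesis $\cD_\varphi v = \bar\cD_\varphi v = 0$ to discard a large number of terms. First I would fix a point $p \in M$ and choose normal coordinates for $\omega_\varphi$ centered at $p$, so that at $p$ one has $g_{\alpha\bar\beta} = \delta_{\alpha\beta}$ and the first derivatives of the metric vanish; this reduces covariant derivatives to ordinary ones at $p$ and lets me freely commute $\partial$ and $\bar\partial$ up to curvature terms, which are the source of the Ricci contributions appearing in $B_\varphi$. The left-hand side $\cD_\varphi\langle\partial v,\bar\partial\xi\rangle_\varphi$ should be expanded using the explicit form \eqref{equa14} of the Lichnerowicz operator, namely $\cD_\varphi = \Delta_\varphi^2 + \langle\sqrt{-1}\ddbar(\cdot),\Ric_\varphi\rangle_\varphi + \langle\partial R_\varphi,\bar\partial(\cdot)\rangle_\varphi$, applied to the scalar function $\langle\partial v,\bar\partial\xi\rangle_\varphi = v_{,\alpha}\xi_{,\bar\beta}g^{\alpha\bar\beta}$.

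The core of the argument is a bookkeeping of the derivatives produced by $\Delta_\varphi^2\langle\partial v,\bar\partial\xi\rangle_\varphi$ via the product rule. Each application of the Laplacian distributes across the two factors $v_{,\alpha}$ and $\xi_{,\bar\beta}$, producing three families of terms: those where all four derivatives hit $\xi$ (which should reassemble into $\langle\partial v,\bar\partial\cD_\varphi\xi\rangle_\varphi$ together with the curvature terms carried along), those where all four hit $v$ (which I expect to vanish by the hypotheses on $v$), and the cross terms where derivatives are split between the two factors (which should assemble into the bilinear form $B_\varphi(v,\xi)$). The key leverage is that $\cD_\varphi v = 0$ means $\nabla^{1,0}_\varphi v$ is holomorphic by Lemma \ref{elliptic}(1), i.e. $v_{,\alpha\bar\beta}$ and its contractions with the inverse metric are governed by the vanishing of $L v = \bar\partial\nabla^{1,0}_\varphi v$; concretely the condition forces certain third-order expressions such as $(g^{\alpha\bar\mu}v_{,\bar\mu})_{,\bar\beta}$ to vanish. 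The simultaneous hypothesis $\bar\cD_\varphi v = 0$ gives the conjugate vanishing, which is essential since $\cD_\varphi$ is not real in the extremal (non-cscK) setting. I would use these two conditions repeatedly to kill every term in which a high-order derivative lands entirely on $v$, and also to simplify the mixed terms by replacing $v$-derivatives with their holomorphic-gradient normal forms.

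The main obstacle will be the combinatorial and curvature bookkeeping in matching the cross terms to $B_\varphi(v,\xi)$: commuting the iterated $\partial$ and $\bar\partial$ derivatives past one another generates Ricci curvature factors (via the commutation formula $v_{,\alpha\bar\beta\gamma} - v_{,\alpha\gamma\bar\beta} = -R_{\gamma\bar\beta}{}^{\delta}{}_{\alpha}v_{,\delta}$ and its relatives), and these must be tracked precisely so that the surviving curvature terms coincide with the two Ricci contractions $u_{,\bar\alpha p}v_{,\beta\bar p}(\Ric_\varphi)_{\alpha\bar\beta}$ and $u_{,\bar p\beta}v_{,p\bar\alpha}(\Ric_\varphi)_{\alpha\bar\beta}$ written in $B_\varphi$. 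Likewise, the last two terms of $\cD_\varphi$ in \eqref{equa14}, when applied to the product, produce a term $\langle\partial R_\varphi,\bar\partial\langle\partial v,\bar\partial\xi\rangle\rangle$ together with a Ricci-pairing term, and I would need to verify that the portion involving $R_\varphi$ reassembles correctly into the $\langle\partial v,\bar\partial\cD_\varphi\xi\rangle_\varphi$ term on the right-hand side while the remainder is absorbed into $B_\varphi$. Since the identity is pointwise and the choice of normal coordinates makes it coordinate-independent, establishing it at the arbitrary center $p$ suffices; the difficulty is entirely in organizing the cancellation, not in any conceptual step, so the verification is elementary but genuinely lengthy, exactly as the statement warns.
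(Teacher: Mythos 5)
Your plan coincides with the paper's proof: the same brute-force index computation, using $\cD_\varphi v=\bar\cD_\varphi v=0$ (equivalently the vanishing of the pure Hessian $v_{,\alpha\beta}$) to stop derivatives from accumulating on $v$, and matching the surviving cross terms with the Ricci contractions in $B_\varphi(v,\xi)$. The one step you defer ``to verify'' --- that the first-order part $\langle\partial R_\varphi,\bar\partial(\cdot)\rangle_\varphi$ of $\cD_\varphi$ causes no discrepancy --- is precisely where the extremality of $\omega_\varphi$ itself (not just the hypotheses on $v$) must be invoked: the paper isolates this term and shows the difference reduces to $-R_{\varphi,\delta\alpha}v_{,\bar\delta}\xi_{,\bar\alpha}+(\cD_\varphi v-\bar\cD_\varphi v)_{,\alpha}\xi_{,\bar\alpha}$, which vanishes because $\nabla^{1,0}_\varphi R_\varphi$ is holomorphic (so $R_{\varphi,\delta\alpha}=0$) together with your assumption on $v$.
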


\begin{proof}
We check next the validity of \eqref{equa41} by a brute-force computation; we first assume that the scalar curvature of metric $\omega_\varphi$ is constant. Then we have the following long sequence of relations together with some explanations when passing from one line to another.
\begin{align}
\cD_\varphi\langle \partial v, \bar \partial \xi\rangle_\varphi  &= \Delta_{\varphi}^2\langle \partial v, \bar\partial \xi\rangle_{\varphi} + \langle \sqrt{-1} \partial \bar \partial \langle \partial v, \bar \partial \xi\rangle_{\varphi}, Ric_{\varphi}\rangle_{\varphi}\\
& = (v_{,\delta} \xi_{, \bar \delta})_{, \alpha \bar \alpha \beta \bar \beta} + (v_{,\delta} \xi_{,\bar \delta})_{, \beta \bar \alpha} (Ric_{\varphi})_{\alpha \bar \beta} \\
& = (v_{,\delta} \xi_{,\bar \delta \alpha})_{, \bar\alpha \beta \bar \beta} + (v_{,\delta} \xi_{, \bar \delta \beta})_{, \bar \alpha} (Ric_{\varphi})_{\alpha \bar \beta}\\
& = (v_{, \delta \bar \alpha} \xi_{, \bar \delta \alpha} + v_{, \delta } \xi_{, \bar \delta \alpha \bar \alpha})_{,\beta \bar \beta} + (v_{, \delta \bar \alpha} \xi_{,\bar \delta \beta} + v_{, \delta} \xi_{, \bar \delta \beta \bar\alpha} )( Ric_{\varphi})_{ \alpha \bar \beta}\\
& (\text{since } \xi_{,\bar \delta \alpha \bar \alpha} = \xi_{, \alpha \bar \delta \bar \alpha} = \xi_{, \alpha \bar \alpha \bar \delta}, \xi_{, \bar \delta \beta \bar \alpha} = \xi_{, \beta \bar \delta\bar \alpha} = \xi_{, \beta \bar \alpha \bar \delta})\\
& = (v_{, \delta \bar \alpha} \xi_{, \bar \delta \alpha} + v_{, \delta } \xi_{, \alpha \bar \alpha \bar \delta})_{,\beta \bar \beta} + v_{, \delta \bar \alpha} \xi_{,\bar \delta \beta} ( Ric_{\varphi})_{\alpha \bar \beta} + v_{, \delta} \xi_{, \beta \bar \alpha \bar \delta} (Ric_{\varphi})_{\alpha \bar \beta} \\
& = \Delta_{\varphi} \langle \partial \bar \partial v , \partial \bar \partial \xi\rangle_{\varphi} + \langle \partial \bar \partial v, \partial \bar \partial \Delta_{\varphi } \xi \rangle_{\varphi}  + v_{, \delta} \xi_{, \alpha\bar \alpha \bar \delta \beta \bar \beta}+  v_{, \delta \bar \alpha} \xi_{,\bar \delta \beta} ( Ric_{\varphi})_{\alpha \bar \beta}\\
 &+ v_{, \delta}\big( \xi_{,\beta \bar \alpha} (Ric_{\varphi})_{\alpha \bar \beta} \big)_{,\bar \delta}  - v_{, \delta}\xi_{,\beta \bar \alpha} (Ric_{\varphi})_{\alpha \bar \beta ,\bar \delta}\\
&(\text{since } \xi_{, \alpha\bar \alpha \bar \delta \beta \bar \beta} = (\Delta_{\varphi }\xi)_{, \bar \delta \beta \bar \beta} =  (\Delta_{\varphi }\xi)_{,  \beta  \bar \delta \bar \beta}  = (\Delta_{\varphi }\xi)_{,  \beta   \bar \beta\bar \delta} = (\Delta_{\varphi}^2 \xi)_{, \bar \delta})\\
& = \Delta_{\varphi} \langle \partial \bar \partial v , \partial \bar \partial \xi\rangle_{\varphi} + \langle \partial \bar \partial v, \partial \bar \partial \Delta_{\varphi } \xi \rangle_{\varphi}  + v_{, \delta} (\Delta_{\varphi}^2 \xi + \xi_{,\beta \bar \alpha} (Ric_{\varphi})_{\alpha \bar \beta})_{, \bar \delta}\\
&+  v_{, \delta \bar \alpha} \xi_{,\bar \delta \beta} ( Ric_{\varphi})_{\alpha \bar \beta} 
- v_{, \delta}\xi_{,\beta \bar \alpha} (Ric_{\varphi})_{\alpha \bar \beta ,\bar \delta}
\end{align}
As a consequence, we infer that we have
\begin{align}
&\cD_\varphi\langle \partial v, \bar \partial \xi\rangle_\varphi- \langle \partial v, \bar \partial \cD_\varphi \xi\rangle_\varphi\\
& = \Delta_{\varphi} \langle \partial \bar \partial v , \partial \bar \partial \xi\rangle_{\varphi} + \langle \partial \bar \partial v, \partial \bar \partial \Delta_{\varphi } \xi \rangle_{\varphi}  +  v_{, \delta \bar \alpha} \xi_{,\bar \delta \beta} ( Ric_{\varphi})_{\alpha \bar \beta} - v_{, \delta}\xi_{,\beta \bar \alpha} (Ric_{\varphi})_{\alpha \bar \beta ,\bar \delta}\\
& = B_{\varphi}(v, \xi).  
\end{align}
In order to establish this equality, we have used the identity 
\begin{align*}
\langle \partial \bar{\partial} \Delta_{\varphi} v, \partial \bar{\partial} \xi  \rangle_{\varphi} &= v_{, \alpha \bar{\alpha} \beta \bar{\delta}} \xi_{, \bar{\beta} \delta}\\
&= (v_{, \alpha \beta \bar{\alpha}} - R_{\beta \bar{\alpha} \alpha \bar{l}} v_{, l})_{, \bar{\delta}} \xi_{,\bar{\beta} \delta}\\
& = - R_{\beta \bar{l}} v_{, l\bar{\delta}} \xi_{, \bar{\beta} \delta} - R_{\beta \bar{l}, \bar{\delta}} v_{, l} \xi_{, \bar{\beta \delta}}.
\end{align*}
This completes the proof of the lemma, in the case of a cscK metric (we remark that we are only
using this hypothesis in the expression of the operator $\cD_\varphi$ in the first line of the long string of equalities above).
\smallskip

In the preceding computations, if $\omega_{\varphi}$ is any K\"ahler metric (i.e. no curvature assumptions), 
then the expression of $\cD_\varphi$ has an additional a term, containing the derivative of scalar curvature. In order to complete the proof, we still have to check that we have
$$(R_{\varphi})_{, \delta}( \langle \partial v , \bar \partial \xi \rangle_{\varphi})_{,\bar \delta} - \langle \partial v , \bar \partial \big( (R_{\varphi})_{,\delta} \xi_{, \bar \delta}\big) \rangle_{\varphi} =0.$$
Here we will use the curvature assumption, namely that $\omega_{\varphi}$ is extremal, because then we have
\begin{align}
&(R_{\varphi})_{,\delta} (v_{,\alpha} \xi_{,\bar \alpha} )_{,\bar \delta} - v_{, \alpha} \big( (R_{\varphi})_{,  \delta} \xi_{,\bar \delta}\big)_{,\bar \alpha} \\
& = (R_{\varphi})_{,\delta} v_{,\alpha} \xi_{,\bar \alpha \bar \delta} + (R_{\varphi})_{,\delta} v_{,\alpha \bar\delta} \xi_{,\bar \alpha }- v_{, \alpha} (R_{\varphi})_{, \delta} \xi_{, \bar \delta \bar \alpha} - v_{,\alpha} (R_{\varphi})_{,  \delta \bar \alpha} \xi_{, \bar \delta}\\
&= \big((R_{\varphi})_{,\delta} v_{,\alpha \bar\delta} - v_{,\delta} (R_{\varphi})_{,  \alpha \bar \delta}\big)\xi_{,\bar \alpha }\\
& = \big((R_{\varphi,\delta} v_{, \bar \delta})_{, \alpha} - R_{\varphi, \delta \alpha} v_{,\bar \delta} - (v_{, \delta} R_{\varphi, \bar \delta})_{, \alpha}\big) \xi_{,\bar \alpha} \\
& = -  R_{\varphi, \delta \alpha} v_{,\bar \delta} \xi_{,\bar \alpha} + (\cD_{\varphi} v - \bar \cD_{\varphi} v )_{,\alpha}\xi_{,\bar \alpha} = 0
\end{align}
The proof of Lemma \ref{lem3} is therefore finished.
\end{proof}
\bigskip

\section{Proof of Theorem $\ref{thm1.1}$}\label{sect3}
We are now ready to prove Theorem $\ref{thm1.1}$, concerning the deformations 
of K\"ahler metrics with constant scalar curvature. This will be achieved by the implicit function theorem; to start with, we define
the functional space 
\begin{align*}
\mathcal{H}^{4,\alpha}(M) = \{\varphi \in C^{4,\alpha}(M, \mathbb{R}) | \omega_{\varphi} =  \omega + \sqrt{-1} \partial \bar{\partial} \varphi > 0\}.
\end{align*}

\noindent The continuity path we will use is the same as the one in \cite{CP}, namely
\begin{align*}
\cF: \mathcal{H}^{4,\alpha} (M) \times [0,1] &\longrightarrow C^{\alpha}(M) \times [0,1]\\
\cF(\varphi, t) & = \big(R_{\varphi} - \underline{R} -(1-t)(\text{tr}_{\varphi} \omega - n), t\big)
\end{align*}
where $R_{\varphi}$ is the scalar curvature of $\omega_{\varphi}$ and
$$\underline{R} = \frac{1}{\Vol(X, \omega)}\int_XR_\varphi\omega_\varphi^n$$
is the average of the scalar curvature (which is easily seen to be a cohomological quantity). The first component of $\cF$ will be denoted in what follows by $F$, i.e.
 
\begin{equation}
F(\varphi, t):= R_{\varphi} - \underline{R} -(1-t)(\text{tr}_{\varphi} \omega - n).
\end{equation}
\medskip
In this section, our main result states as follows.
\begin{thm} \label{main} Let $(M, \omega)$ be a compact K\"ahler manifold, such that 
the scalar curvature of $\omega$ is constant. We denote by $\varphi_1\in \wt \cH$ the potential for which the
restriction $\displaystyle \iota |_{\Psi^\omega(\cO)}$ 
is minimal; let $\omega_{\varphi_1}\in [\omega]$ be the corresponding metric. Then there exists $\epsilon > 0 $, such that for any $1-\epsilon< t \leq 1$, there exists $\varphi_t=\varphi(t, \cdot)$ satisfying $$F(\varphi_t,t) = 0,$$ 
and such that $\phi(1, \cdot)$ coincides with the potential $\varphi_1$.
\end{thm}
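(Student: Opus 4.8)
The plan is to solve $F(\varphi,t)=0$ for $t$ slightly below $1$ by the implicit function theorem, anchored at the point $(\varphi_1,1)$. Since $\omega_{\varphi_1}=g^\star\omega$ is a biholomorphic image of the cscK metric $\omega$, it is itself cscK, so $R_{\varphi_1}=\underline R$ and $F(\varphi_1,1)=0$. The obstruction is immediate: differentiating $F$ in $\varphi$ at $(\varphi_1,1)$ kills the $(1-t)$ term and leaves the linearization of the scalar curvature at a cscK metric, namely $-\cD_{\varphi_1}$. By Lemma \ref{elliptic}(1) together with Proposition \ref{tangent}, its kernel is exactly $K:=\Psi^\omega_\star(T_{\cO,g})$, the tangent space to the orbit, so $D_\varphi F(\varphi_1,1)$ is not invertible and a direct application of the IFT fails. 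This is precisely the degeneracy that Bando--Mabuchi resolve by a Lyapunov--Schmidt reduction, which I would follow.

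Concretely, I would use the orthogonal decompositions furnished by Lemma \ref{elliptic}(2): write $C^{4,\alpha}(M)=K\oplus W$ and $C^{\alpha}(M)=K\oplus V$, where $W$ is the $L^2(\omega_{\varphi_1}^n)$--orthogonal complement of $K$ and $V=\cD_{\varphi_1}\big(C^{4,\alpha}(M)\big)$, with $\Pi_K,\Pi_V$ the associated projections. Writing $\varphi=\varphi_1+k+w$ with $k\in K$, $w\in W$, I first solve the range equation $\Pi_V F(\varphi_1+k+w,t)=0$: its $w$--derivative at $(0,0,1)$ is $-\cD_{\varphi_1}|_W\colon W\to V$, an isomorphism, so the IFT produces a unique smooth solution $w=w(k,t)$ with $w(0,1)=0$. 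Substituting it back leaves the finite--dimensional bifurcation equation
\[
\Phi(k,t):=\Pi_K F\big(\varphi_1+k+w(k,t),t\big)=0,\qquad \Phi\colon K\times[0,1]\to K .
\]

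The heart of the matter is to solve this reduced equation near $(0,1)$, and here the special choice of $\varphi_1$ as the minimizer of $\iota|_{\Psi^\omega(\cO)}$ enters decisively. First, at $t=1$ the whole orbit consists of cscK metrics, so $\Phi(\cdot,1)$ vanishes identically for small $k$ (the orbit is locally parametrized by its $K$--component, its points solve the full equation, and by uniqueness in the range step $w(k,1)$ is the orbit's transverse part); by Hadamard's lemma $\Phi(k,t)=(t-1)\widetilde\Phi(k,t)$, so it suffices to solve $\widetilde\Phi(k,t)=0$. Second, $\partial_t\Phi(0,1)=\Pi_K(\tr_{\varphi_1}\omega-n)$ vanishes, because the minimality of $\varphi_1$ and the gradient identity \eqref{grad} force $\tr_{\varphi_1}\omega-n$ to be $L^2$--orthogonal to $K=T_{\varphi_1}\Psi^\omega(\cO)$; thus $\widetilde\Phi(0,1)=0$. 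Finally, I would compute the mixed derivative $\partial_k\widetilde\Phi(0,1)=\partial_k\partial_t\Phi(0,1)$ and identify it, up to the transverse corrections coming from $w$, with the Hessian of $\iota|_{\Psi^\omega(\cO)}$ at its minimum. Along the orbit geodesics $t\mapsto\Psi^\omega(\exp(tX_\bR))$ (Mabuchi, as in Lemma \ref{min}) the second variation of $\iota$ reduces to $\int_M h_{,\alpha}h_{,\bar\beta}\,\omega_{\bar\alpha\beta}\,\omega_{\varphi_1}^n>0$, so this Hessian is positive definite, hence invertible on $K$. The IFT applied to $\widetilde\Phi$ then yields a smooth curve $k(t)$ with $k(1)=0$, and $\varphi_t:=\varphi_1+k(t)+w(k(t),t)$ solves $F(\varphi_t,t)=0$ with $\varphi_1$ recovered at $t=1$.

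I expect the genuine difficulty to lie in the second--order analysis of the bifurcation equation: computing $\partial_k\partial_t\Phi(0,1)$ rigorously requires controlling the implicitly defined map $w(k,t)$ and the fact that the kernel $K_\varphi$ moves with $\varphi$, and then matching the surviving terms with the convexity formula for $\iota$. A secondary technical point is the functional-analytic bookkeeping: carrying the argument out as a map $C^{4,\alpha}\to C^{\alpha}$, checking that the splittings are compatible with these H\"older spaces, and finally using elliptic regularity to bootstrap the resulting $C^{4,\alpha}$ solutions to $C^\infty$ and to obtain smooth dependence on $t$, so that $\phi(t,\cdot)$ is smooth as claimed.
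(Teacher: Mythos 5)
Your proposal follows essentially the same route as the paper: the same Lyapunov--Schmidt reduction anchored at the $\iota$-minimizer, the same use of minimality to place $\tr_{\varphi_1}\omega-n$ orthogonal to $\Ker\cD_{\varphi_1}$, the same division of the bifurcation map by $(t-1)$, and the same identification of $\partial_k\partial_t\Phi(0,1)$ with the positive form $\int v_{,\alpha}v_{,\bar\beta}\,\omega_{\bar\alpha\beta}\,\omega_{\varphi_1}^n$. The one step you leave as a prediction rather than a computation --- matching the mixed derivative with the Hessian of $\iota$ despite the moving kernel and the implicit map $w(k,t)$ --- is exactly what the paper's Leibniz-type identity (Lemma \ref{lem3}) is designed to accomplish, and your predicted outcome agrees with theirs.
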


\begin{proof}
As we have already mentioned, we intend to use the implicit function theorem, so the first thing to do 
would be to compute the differential of $\cF$ at the point $(\varphi_1, 1)$ for which we have
$\cF(\varphi_1,1) = 0$. A standard calculation (which will not be detailed here) shows that we have
\begin{align*}
d\cF_{(\varphi_1, 1)}: C^{4,\alpha}(M) \times \mathbb{R}& \longrightarrow C^{\alpha}(M)  \times \mathbb{R}\\
(u, s) &\longmapsto    (-\cD_{\varphi_1}u + s(\text{tr}_{\varphi_1} \omega - n), s),
\end{align*}
where $\cD_{\varphi_1}$ is the Lichnerowicz operator  
with respect to $\omega_{\varphi_1}$ defined in the previous section (we are using here the fact that the  
scalar curvature of $\omega_{\varphi_1}$ is constant).

Let $u_0$ be a smooth function such that $\cD_{\varphi_1}(u_0)= 0$ (we notice that in our set-up, the kernel of 
$\cD_{\varphi_1}$ has strictly positive dimension); then we have $\displaystyle d\cF_{(\varphi_1, 1)}(u_0, 0)= 0$. Also, we remark that thanks to the minimality property of $\varphi_1$, we have 
\begin{equation}\label{eq100}
\int u (\text{tr}_{\varphi_1} \omega - n) \omega_{\varphi_1}^n = 0 
\end{equation}
for any $\displaystyle u \in \Ker(\cD_{\varphi_1})$: this is a consequence of Proposition \ref{tangent}, combined with the
relation \eqref{grad}. In conclusion, $\displaystyle d\cF_{(\varphi_1, 1)}$ is neither injective nor surjective.
\smallskip

\noindent Let $k$ be a positive integer; we introduce the following notations. 
\begin{align*}
\mathcal{H}_{\varphi_1} &= \{ u \in C^{\infty}(M) | \cD_{\varphi_1}(u) = 0 ,\int u \omega_{\varphi_1}^n = 0  \} \\
\mathcal{H}_{\varphi_1, k}^{\perp} &= \{u\in C^{k, \alpha}(M) | \int u \omega_{\varphi_1}^n =0, \int u v \omega_{\varphi_1}^n =0 , \text{for all }    v \in \mathcal{H}_{\varphi_1} \}.
\end{align*}
Thus we have the decomposition $C^{k,\alpha} (M) =\mathbb{R} \oplus \mathcal{H}_{\varphi_1} \oplus 
\mathcal{H}_{\varphi_1, k}^{\perp} $. 
By using these notations the relation \eqref{eq100} becomes
 \begin{equation}\label{eq101}
 \text{tr}_{\varphi_1} \omega - n \in \mathcal{H}_{\varphi_1, 0}^{\perp}.
 \end{equation}
 
 \noindent Consider the following projection map 
\begin{align*}
\Pi: ( \mathbb{R} \oplus \mathcal{H}_{\varphi_1} \oplus \mathcal{H}_{\varphi_1,4}^{\perp}) \times [0,1] &\longrightarrow (\mathbb{R} \oplus \mathcal{H}_{\varphi_1} \oplus \mathcal{H}_{\varphi_1,0}^{\perp}) \times [0,1] \\
(a+u + w ,t)  & \longmapsto ( a+ u + \pi_2\circ F( \varphi_1 + a+ u + w, t) ,t ),
\end{align*}
where $\pi_2$ is the projection from $C^{\alpha}(M)$ to $\mathcal{H}_{\varphi_1,0}^{\perp}$. The 
derivative of $\Pi$ at $(0, 1)$ equals
\begin{align*}
d\Pi_{(0,1)}: (\mathbb{R} \oplus \mathcal{H}_{\varphi_1} \oplus \mathcal{H}_{\varphi_1,4}^{\perp}) \times \mathbb{R} &\longrightarrow (\mathbb{R} \oplus \mathcal{H}_{\varphi_1} \oplus \mathcal{H}_{\varphi_1,0}^{\perp}) \times \mathbb{R}\\
( a+ u + w ,s)  & \longmapsto ( a+ u -\cD_{\varphi_1} w + s (\text{tr}_{\varphi_1} \omega- n) ,s). 
\end{align*}
\smallskip

The relation $\text{tr}_{\varphi_1} \omega- n \in \mathcal{H}_{\varphi_1, 0}^{\perp}$ 
combined with Lemma $\ref{elliptic}$ show that $d\Pi|_{(\varphi_1,1)}$ is bijective. By the 
inverse function theorem,
given any $\|u\|_{C^{\alpha}(M)} < \epsilon$ and $|t-1| < \epsilon$ we obtain $\psi(u, t)$ such that 
\begin{equation}\label{eq102}
\pi_2 \circ F(\varphi_1 +u + \psi(u,t), t) = 0.
\end{equation}
The equality \eqref{eq102} shows that we have 
\begin{equation}
-\cD_{\varphi_1} \frac{\partial \psi}{\partial t}\big\vert_{(0,1)} + \text{tr}_{\varphi_1} \omega - n = 0 
\end{equation}
by differentiating with respect to $t$. Also, the derivative of \eqref{eq102} with respect to $u$ gives
\begin{equation}\label{42}
\frac{\partial \psi}{\partial u}\big\vert_{(0,1)}( v) = 0, 
\end{equation}
for any $v \in \mathcal{H}_{\varphi_1}$.


\noindent We introduce the functional
\begin{equation}
P(u,t):= \pi_1\circ F(\varphi_1 + u + \psi(u,t), t)
\end{equation}
where $\pi_1$ is the projection onto the factor $\mathcal{H}_{\varphi_1}$.
In order to finish the proof, it remains to solve the equation 
$$P(u_t, t) =0$$ 
for each $1-\varepsilon< t\leq 1$.
However, we cannot apply the implicit function theorem, because
it turns out that $P(u,1) = 0$ for any $u\in \mathcal{H}_{\varphi_1}$. Indeed, the differential of $P$ with respect to $u$ vanishes at each point $(u, 1)$ (this is a consequence of \eqref{42}, combined with the fact that $P(0, 1)= 0$).

\noindent Then we consider the ``first derivative"
\begin{equation}\label{44}
\widetilde{P}(u,t): = \frac{P(u,t)}{t-1} 
\end{equation}
and we observe that $\widetilde{P}(u, t)$ can be extended as a continuous function on $\mathcal{H}_{\varphi_1} \times [0,1]$, because of the equality
\begin{align*}
\widetilde{P}(u,1) = \lim_{t \rightarrow 1^{-}} \frac{P(u,t)}{t-1}  = \frac{\partial P}{\partial t}\big|_{(u,1)}.
\end{align*}

\noindent Our next observation is that it would be enough to solve the equation $\wt P(u_t, t)= 0$, and so
we will compute the partial derivative $\displaystyle \frac{\partial \wt{P}}{\partial u}\big|_{(0,1)}$ and we will show that it is invertible. Prior to this, we re-write the expression of $\wt P$ as follows.
\begin{align*}
\wt{P}(u,1) = \frac{\partial}{\partial t} P|_{(u,1)} &= \pi_1[-\cD_{\varphi_1 + u + \psi_{u,1}} \frac{\partial \psi}{\partial t}\big|_{(u,1)} + \text{tr}_{\varphi_1 + u + \psi_{u,1}}\omega - n]\\
& =\pi_1 [- \Delta_{\varphi_1 + u + \psi_{u,1}}^2 \frac{\partial\psi}{\partial t}\big|_{(u,1)} - \big(\frac{\partial\psi}{\partial t}\big|_{(u,1)}\big)_{,\bar{\alpha} \beta} (Ric_{\varphi_1 + u + \psi_{u,1}})_{\alpha \bar{\beta}} \\
&\qquad\qquad\quad + \text{tr}_{\varphi_1 + u + \psi_{u,1}}\omega - n]
\end{align*}
\medskip

\noindent We compute
\begin{align*}
\frac{\partial}{\partial u} \wt{P} |_{(0,1)} (v)&= \pi_1\{\langle  \partial \bar{\partial} v , \partial \bar{\partial}\Delta_{\varphi_1} \xi \rangle_{\varphi_1} + \Delta_{\varphi_1}\langle  \partial \bar{\partial} v , \partial \bar{\partial}\xi\rangle_{\varphi_1} +\langle  \partial \bar{\partial} \Delta_{\varphi_1} v , \partial \bar{\partial}\xi\rangle_{\varphi_1}+\xi_{,\bar{\alpha}p} v_{,\bar{p}\beta} (Ric_{\varphi_1})_{\alpha\bar{\beta}} \\
&\qquad +\xi_{,\bar{p}\beta} v_{,p\bar{\alpha}} (Ric_{\varphi_1})_{\alpha\bar{\beta}} - \langle\partial\bar{\partial} v, \chi\rangle_{\varphi_1}  -\cD_{\varphi_1}\frac{\partial^2\psi}{\partial u \partial t}|_{(0,1)}(v)\}\\
&= \pi_1[B_{\varphi_1} (v, \xi) - \langle\partial\bar{\partial} v, \chi\rangle_{\varphi_1}]
\end{align*}
where $\xi = \frac{\partial\psi}{\partial t}|_{(0,1)} $ and $B_{\varphi_1}(v,\xi)$ is the operator in Lemma 
$\ref{lem3}$. The previous string of equalities combined with Lemma $\ref{lem3}$ imply that we have
\begin{align*}
\frac{\partial}{\partial u} \wt{P} |_{(0,1)} (v) &= \pi_1[ \cD_{\varphi_1}(\langle \partial v, \bar{\partial} \xi \rangle_{\varphi_1}) - \langle \partial v, \bar{\partial} \cD_{\varphi_1} \xi \rangle_{\varphi_1} - \langle\partial\bar{\partial} v, \omega\rangle_{\varphi_1}]\\
& = \pi_1( - \langle \partial v, \bar{\partial} (\text{tr}_{\varphi_1} \omega - n) \rangle_{\varphi_1} - \langle\partial\bar{\partial} v, \omega\rangle_{\varphi_1}).
\end{align*}

\noindent Then we see that the scalar product 
\begin{align*}
\int\frac{\partial \wt{P}}{\partial u}\big|_{(0,1)} (v) v \omega_{\varphi}^n &= \int (-\langle \partial v, \bar{\partial} ( \text{tr}_{\varphi_1} \omega - n)\rangle_{\varphi_1}v -  \langle\partial\bar{\partial }v, \omega\rangle_{\varphi_1} v) \omega_{\varphi_1}^n\\
&= \int v_{,\bar{\alpha}} v_{, \beta} \omega_{\alpha\bar{\beta}} \omega_{\varphi_1}^n \geq 0,
\end{align*}
is positive, and it is equal to zero if and only if $v=0$ in $\mathcal{H}_{\varphi_1}$.
Therefore, $\displaystyle \frac{\partial \wt{P}}{\partial u} \big|_{(0,1)}$ is injective and therefore bijective. 
The implicit function theorem shows that there exists $u_t$ such that $P(u_t, t) = 0$ for t sufficiently close to 1; when combined with \eqref{eq102}, this implies
\noindent 
\begin{align*}
F(\varphi_1 + u_t + \psi({u_t,t}) , t) = 0\end{align*}
which is what we wanted to prove.

\end{proof}
\medskip

\noindent The uniqueness of constant scalar curvature metrics follows almost immediately.
\begin{cor}
Suppose there exists two cscK metrics $\omega_{\varphi_1} , \omega_{\varphi_2}\in [\omega]$. Then there exists an element $\sigma \in Aut_0(M)$ such that $\sigma^* \omega_{\varphi_1} = \omega_{\varphi_2}$.
\end{cor}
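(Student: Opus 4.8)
The plan is to deduce the uniqueness from Theorem \ref{thm1.1} together with the convexity of the twisted K-energy. Keeping $\omega$ as the fixed twisting form, recall the one-parameter family of functionals $E_t$ on $\cH^\infty(M)$ from the introduction, whose Euler--Lagrange equation is, after a change of parameter, exactly equation \eqref{tcscK}; thus $\omega_\varphi$ is twisted cscK for the parameter $t$ if and only if $\varphi$ is a critical point of $E_t$. First I would apply Theorem \ref{thm1.1} twice, with the same reference form $\omega$, once to $\omega_{\varphi_1}$ and once to $\omega_{\varphi_2}$. This produces $\epsilon>0$, automorphisms $f_1,f_2\in\Aut_0(M)$, and two smooth families $\phi^{(i)}:(1-\epsilon,1]\times M\to\bR$ ($i=1,2$) such that each $\omega_{\phi^{(i)}(t,\cdot)}$ solves \eqref{tcscK} for $1-\epsilon<t<1$, while at the endpoint $\omega_{\phi^{(i)}(1,\cdot)}=f_i^\star\omega_{\varphi_i}$. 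After shrinking $\epsilon$ both families live on a common interval.

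The decisive step is the uniqueness of solutions of \eqref{tcscK} for a fixed $t<1$. Here I would use that any two potentials in $[\omega]$ are joined by a $C^{1,1}$ geodesic (\cite{C}) and that, for $t<1$, the functional $E_t$ is strictly convex along such geodesics --- this is precisely where \cite{BB}, \cite{CLP} and the strict convexity of $J_\omega$ from \cite{C2} are invoked. Since a strictly convex functional admits at most one critical point, two twisted cscK metrics $\omega_a,\omega_b$ for the same $t<1$ must coincide: restricting $E_t$ to the $C^{1,1}$ geodesic joining them gives a strictly convex function of one variable whose one-sided derivatives vanish at both endpoints (because each endpoint is a critical point of $E_t$), which forces the geodesic to be constant. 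Applying this to $\omega_{\phi^{(1)}(t,\cdot)}$ and $\omega_{\phi^{(2)}(t,\cdot)}$, which solve \eqref{tcscK} for the same twisting form $\omega$ and the same $t$, yields $\omega_{\phi^{(1)}(t,\cdot)}=\omega_{\phi^{(2)}(t,\cdot)}$ for every $1-\epsilon<t<1$.

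To conclude I would let $t\to1^-$. Because each $\phi^{(i)}$ is smooth up to $t=1$, we have $\omega_{\phi^{(i)}(t,\cdot)}\to f_i^\star\omega_{\varphi_i}$, so the equality of the two families for $t<1$ passes to the limit and gives $f_1^\star\omega_{\varphi_1}=f_2^\star\omega_{\varphi_2}$. Applying $(f_2^{-1})^\star$ and setting $\sigma:=f_1\circ f_2^{-1}\in\Aut_0(M)$ then gives $\sigma^\star\omega_{\varphi_1}=\omega_{\varphi_2}$, which is the assertion.

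I expect the main obstacle to lie in the two analytic inputs rather than in the bifurcation scheme itself. The strict convexity of $E_t$ has to be exploited along $C^{1,1}$ --- not smooth --- geodesics, so the formal second-variation computation must be replaced by the weak convexity results of \cite{BB}, \cite{CLP}; in particular, justifying that a critical point of $E_t$ has vanishing one-sided derivative along a merely $C^{1,1}$ geodesic is the delicate point. The second subtlety is organizational but essential: the comparison works only because both deformations solve the same equation \eqref{tcscK} (same $\omega$, same $t$), and because the twisting term destroys the automorphism invariance for $t<1$, so that the $t<1$ solutions are genuinely unique and the automorphism freedom reappears only in the limit $t\to1$.
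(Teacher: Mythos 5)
Your proposal is correct and follows essentially the same route as the paper: apply the deformation theorem to both cscK metrics with the same reference form $\omega$, invoke the uniqueness of the twisted cscK solution for each fixed $t<1$ (coming from the strict convexity of $E_t$ along $C^{1,1}$ geodesics via \cite{BB}, \cite{CLP} and \cite{C2}), and let $t\to 1$. The only cosmetic difference is that the paper phrases the argument as a contradiction between two distinct $\Aut_0(M)$-orbits, starting from the $\iota$-minimizers on each, whereas you run the argument directly and assemble the automorphism as $\sigma=f_1\circ f_2^{-1}$; the substance, including the delicate point you correctly flag about one-sided derivatives of $E_t$ along merely $C^{1,1}$ geodesics, is identical.
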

\begin{proof}
We argue by contradiction: suppose we have two cscK orbits $\mathcal{O}_1$ and $\mathcal{O}_2$ such that $\mathcal{O}_1 \neq \mathcal{O}_2$. Then we consider the K\"ahler potentials $\varphi_1$ and $\varphi_2$ for which the restriction of $\iota$ to $\mathcal{O}_1$ and $\mathcal{O}_2$ is reached,
respectively,
 
By Theorem $\ref{main}$, we obtain two paths $\varphi_i (t)$ with $\varphi_k(1, \cdot) =\varphi_k$, for
$k = 1, 2$; moreover, we obtain  
\begin{align}\label{eqn12}
R_{\varphi_k(t)} -\underline{R} - (1-t)(\text{tr}_{\varphi_k(t)} \omega- n) = 0.
\end{align}
As explained in the introduction, for fixed $t<1$, the solution of equation $(\ref{eqn12})$ is unique. Thus, for any $1-\epsilon < t < 1$, $\varphi_1(t) = \varphi_2(t)$. In particular $\varphi_1 = \varphi_2$. Therefore, we are done.\end{proof}


\section{Twisted extremal K\"ahler metrics}

\medskip

\noindent We start with a general discussion about the proof of Theorem \ref{thm1.2} which will follow; hopefully, this will clarify a few facts/choices which will appear shortly.
\smallskip

Let $\displaystyle \omega_{\varphi_1}\in [\omega]$ be an extremal metric.
In order to prove Theorem \ref{thm1.2}, our strategy will be to determine the 
path $\varphi_t:= \varphi(t, \cdot)$ by
solving the equation 

\begin{equation}\label{0501}
\nabla^{1, 0}_{\varphi_t}\big(R_{\varphi_t} -(1-t) \tr_{\varphi_t} \omega\big)= X_1
\end{equation}
where $X_1:= \nabla^{1, 0}_{\varphi_1}(R_{\varphi_1})$ is a holomorphic vector field. 
\smallskip

\noindent We show next that the order of differentiation in the expression \eqref{0501} can be reduced. Indeed we have
\begin{equation}\label{0502}
i_{X_1}\omega_{\varphi_t}:= \sqrt{-1} \dbar \rho_t(X_1)
\end{equation}
for a unique function $\rho_t(X_1): M\to \bC$ normalized such that 
\begin{equation}\label{0503}
\int_X\rho_t(X_1)\omega_{\varphi_t}^n= 0.
\end{equation}
(this can be seen by writing $\omega_{\varphi_t}= \omega_{\varphi_1}+ \sqrt{-1}\ddbar \phi_t$). By combining 
\eqref{0501} and \eqref{0502}, the equation we have to solve is equivalent to
\begin{equation}\label{0504}
R_{\varphi_t} -\underline R-(1-t)(\tr_{\varphi_t}\omega- n)= \rho_t(X_1).
\end{equation}
The equation \eqref{0504} above is very similar to the one we had to deal with in the previous section. 
We could then simply follow the same procedure as in the proof of Theorem \ref{thm1.1} (i.e. start with an extremal metric whose potential minimizes the functional $\iota$ and so on) in order to conclude,
even if 
the presence of the factor $\rho_t(X_1)$ complicates a bit the situation, as we will see next.
However in doing so, we would not be able to obtain the uniqueness statement Corollary \ref{cor1.4}, for a simple reason which will become obvious at the end of this section (basically we need the holomorphic gradient
of the scalar curvature corresponding to $\omega_1$ and $\omega_2$ to coincide). Also the term $\rho_{t} (X_1)$ would in general be complex valued and we don't want to choose our image space to be complex valued functions.

\medskip

\noindent Luckily, it is possible to bypass these difficulties by using the following results; the first is 
due to E. Calabi.

\begin{thm} \label{aut_cal}
{\rm \cite{Ca2}}
For any extremal K\"ahler metric $g$ in a compact complex manifold $M$, the identity component 
$\Iso_0(M, g)$ of the group of holomorphic isometries of $(M, g)$ coincides with a maximal compact connected subgroup of $\Aut_0(X)$.
\end{thm}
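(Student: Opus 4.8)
The statement is Calabi's theorem from \cite{Ca2}, and the plan is to deduce it from the Myers--Steenrod theorem together with a structural analysis of the Lie algebra $H^0(M,T_M)$ of holomorphic vector fields, which is the Lie algebra of $\Aut_0(M)$. First, since $M$ is compact, its isometry group is compact by Myers--Steenrod; hence $\Iso_0(M,g)$ is a compact connected subgroup of $\Aut_0(M)$, whose Lie algebra $\mathcal{K}$ consists of the holomorphic Killing fields. Maximality among compact connected subgroups is then a purely Lie-theoretic assertion: every compact subgroup of $\Aut_0(M)$ lies in a maximal one and all maximal compact subgroups are conjugate, so it suffices to exhibit a reductive Levi factor $\mathcal{Z}\subseteq H^0(M,T_M)$ with $\mathcal{Z}=\mathcal{K}\oplus J\mathcal{K}$ and $\mathcal{K}$ its compact real form.

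The geometric ingredient producing $\mathcal{Z}$ is the extremal vector field. By hypothesis $\mathrm{grad}^{1,0}R := \nabla^{1,0}_\varphi R$ is holomorphic, and since $R$ is real we may write $\mathrm{grad}^{1,0}R=\tfrac12(\nabla R-\sqrt{-1}\,\xi)$ with $\xi:=J\nabla R$ a Killing field, the \emph{extremal field}, lying in the centre of $\mathcal{K}$. I would let $\mathcal{Z}$ be the centralizer of $\mathrm{grad}^{1,0}R$ in $H^0(M,T_M)$ and decompose the algebra into eigenspaces of $\mathrm{ad}(\mathrm{grad}^{1,0}R)$,
\[
H^0(M,T_M)=\mathcal{Z}\oplus\mathcal{N},
\]
where $\mathcal{N}$ is the sum of the nonzero eigenspaces. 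The extremal condition forces these eigenvalues to be real (in Calabi's example, the blow-up of $\bP^2$, they vanish on the Levi part and are positive on the unipotent radical), so conjugation by $\exp(t\,\mathrm{grad}^{1,0}R)$ rescales each $\mathcal{N}$-component by $e^{t\lambda}$ with $\lambda\neq 0$. Since Killing fields have bounded conjugation orbits, this shows $\mathcal{K}\subseteq\mathcal{Z}$ and $\mathcal{K}\cap\mathcal{N}=0$; the same grading, $[\mathcal{Z},\mathcal{N}]\subseteq\mathcal{N}$ and $[\mathcal{N},\mathcal{N}]\subseteq\mathcal{N}$, makes $\mathcal{N}$ a nilpotent ideal and $\mathcal{Z}$ a subalgebra.

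The heart of the matter is to prove that $\mathcal{Z}$ is \emph{reductive} with $\mathcal{Z}=\mathcal{K}\oplus J\mathcal{K}$. On the full algebra this is false for a genuinely extremal (non-cscK) metric — which is exactly why $\mathcal{N}\neq 0$ — but on $\mathcal{Z}$ the obstruction disappears: an element of $\mathcal{Z}$ commutes with $\mathrm{grad}^{1,0}R$, so its holomorphy potential lies in the zero-eigenspace of $\mathrm{grad}^{1,0}R$ acting as a derivation, and there the relevant Lichnerowicz operator behaves as in the constant scalar curvature case. I would then run the Matsushima--Lichnerowicz argument \emph{restricted to} $\mathcal{Z}$: writing a holomorphy potential of a field in $\mathcal{Z}$ as $f=f_1+\sqrt{-1}f_2$, one checks that $f_1,f_2$ are again holomorphy potentials with $J\nabla f_j$ Killing, using the identification of $\Ker\cD_\varphi$ with holomorphy potentials in Lemma \ref{elliptic}, so that $\mathcal{Z}=\mathcal{K}\oplus J\mathcal{K}$ with compact real form $\mathcal{K}$.

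Granting this, $H^0(M,T_M)=\mathcal{Z}\oplus\mathcal{N}$ is a Levi decomposition with reductive factor $\mathcal{Z}=\mathcal{K}^{\bC}$ and nilradical $\mathcal{N}$, so $\mathcal{K}$ is a maximal compactly embedded subalgebra. If $\hat K\supseteq\Iso_0(M,g)$ were a strictly larger compact connected subgroup, its Lie algebra $\hat{\mathcal{K}}$ would consist of semisimple elements with bounded conjugation orbits, hence would avoid $\mathcal{N}$ and, since all Levi factors are conjugate, could be placed inside $\mathcal{Z}=\mathcal{K}^{\bC}$; a compactly embedded subalgebra of $\mathcal{K}^{\bC}$ containing the maximal compact $\mathcal{K}$ must equal $\mathcal{K}$, whence $\hat K=\Iso_0(M,g)$. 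The main obstacle is the reductivity step: precisely because $\cD_\varphi$ fails to be real in the extremal case one cannot globally split holomorphy potentials into real and imaginary parts, and the delicate point is to control the interaction between the eigenvalue grading of $\mathrm{grad}^{1,0}R$ and the gradient/Killing decomposition so as to localise the cscK-type argument to the centralizer $\mathcal{Z}$.
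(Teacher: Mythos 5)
The first thing to say is that the paper does not prove this statement: it is quoted from Calabi's \emph{Extremal K\"ahler metrics II} and used as a black box, so there is no internal proof to compare yours against. Your reconstruction does follow Calabi's actual route --- grade $H^0(M,T_M)$ by the eigenvalues of the adjoint action of the extremal field, identify the zero piece $\mathcal{Z}$ with $\mathcal{K}\oplus J\mathcal{K}$ using that $\Ker\cD_\varphi\cap\Ker\bar\cD_\varphi$ is stable under complex conjugation --- and this is exactly the decomposition the paper itself borrows in the proof of Proposition \ref{Ktangent}. The skeleton is right, but two of your steps do not close as written.

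First, the ``bounded conjugation orbit'' mechanism is applied to the wrong flow. On the $\lambda$-eigenspace the identities used in Proposition \ref{Ktangent} give $\Xi(f_\lambda)=\tfrac{\sqrt{-1}}{2}\lambda f_\lambda$ for the Killing part $\Xi=\Im(\nabla^{1,0}_\varphi R_\varphi)$: the isometric flow $\exp(tJ\nabla R)$ only \emph{rotates} the $\mathcal{N}$-components, so boundedness of its conjugation orbits detects nothing, while the flow that rescales by $e^{t\lambda/2}$ is the gradient flow $\exp(t\nabla R)$, which is not an isometry, and there is no a priori bound on $\mathrm{Ad}(\exp(t\nabla R))Y$ for $Y$ Killing --- that bound is essentially what you are trying to prove. (For $\mathcal{K}\subseteq\mathcal{Z}$ there is a direct argument: isometries preserve $R$ and $g$, hence commute with $\nabla^{1,0}R$. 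For maximality you instead need to average $Y\in\hat{\mathcal{K}}$ over the torus generated by $J\nabla R$ to see that the $\mathcal{N}$-component of $Y$ lies in $\hat{\mathcal{K}}$ itself, then observe that it generates a relatively compact one-parameter group, so its $\mathrm{ad}$ is semisimple, while membership in the nilpotent ideal $\mathcal{N}$ makes its $\mathrm{ad}$ nilpotent, hence zero, contradicting $[\nabla R,\cdot]\neq 0$ on $\mathcal{N}$.) Second, your closing claim that a compactly embedded subalgebra of $\mathcal{K}^{\bC}$ containing $\mathcal{K}$ must equal $\mathcal{K}$ is false as abstract Lie theory precisely in the relevant direction: $\mathcal{K}$ has nontrivial center (it contains $J\nabla R\neq 0$ whenever $g$ is extremal but not cscK), and $\mathcal{K}$ together with $J$ applied to its center is a strictly larger subalgebra carrying an invariant inner product. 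The missing ingredient is geometric rather than algebraic: modulo $\mathcal{K}$, an element of $\mathcal{Z}$ is a gradient field $\nabla u$ with $u$ a real holomorphy potential, and a nonzero gradient field on a compact manifold never generates a relatively compact one-parameter group of diffeomorphisms, since $u$ is monotone along its flow. That is the point where compactness of the putative larger \emph{group}, rather than of its Lie algebra, actually enters, and it should be stated explicitly.
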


\noindent The following statement is a reformulation of a result due to Futaki-Mabuchi, cf. \cite{FM},
in which we are using Theorem \ref{aut_cal}.
\begin{thm} \label{uniq_vf}
{\rm \cite{FM}}
Let $g_j\in [\omega]$ be two extremal metrics, such that 
$$\Iso_0(M, g_1)= \Iso(M, g_2).$$
Then we have $\displaystyle \nabla^{1,0}_{g_1}(R_{g_1})= \nabla^{1,0}_{g_2}(R_{g_2})$.
\end{thm}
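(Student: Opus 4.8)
The plan is to realize both extremal vector fields as elements of one fixed finite-dimensional complex Lie algebra, and to characterize each of them as the element representing the Futaki character through the Futaki--Mabuchi bilinear form; since that form and the Futaki character depend only on $[\omega]$ and on the common maximal compact subgroup, the two fields are forced to coincide. First I would set $K := \Iso_0(M, g_1) = \Iso_0(M, g_2)$, with Lie algebra $\mathfrak{k}$. By Theorem \ref{aut_cal}, $K$ is a maximal compact connected subgroup of $\Aut_0(M)$, so $\mathfrak{k}^{\bC}$ is a maximal reductive subalgebra of $H^0(M, T_M)$; let $\mathfrak{z}$ denote its center, which is the complexification of the center $\mathfrak{z}_{\mathfrak{k}}$ of $\mathfrak{k}$. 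By Calabi's structure theorem for the automorphism algebra of an extremal manifold (\cite{Ca2}), the extremal field $V_j := \nabla^{1,0}_{g_j}(R_{g_j})$ lies in $\mathfrak{z}$, its imaginary part being Killing for $g_j$ and hence in $\mathfrak{k}$. The crucial structural point is that, because $K$ is common to both metrics, $V_1$ and $V_2$ are elements of the \emph{same} space $\mathfrak{z}$. For $W \in \mathfrak{z}$ and a $K$-invariant metric $g\in[\omega]$ I write $\theta^g_W$ for the normalized holomorphy potential, $i_W\omega_g = \sqrt{-1}\bar\partial\theta^g_W$ with $\int_M \theta^g_W\,\omega_g^n = 0$ (as in \eqref{0502}); by the very definition of $\nabla^{1,0}_{g_j}$ one has $\theta^{g_j}_{V_j} = R_{g_j} - \underline{R}$.

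Next I would bring in the two invariants attached to the pair $([\omega], K)$. The Futaki character $\mathcal{F}:\mathfrak{z}\to\bC$, given by $\mathcal{F}(W) = \int_M \theta^g_W\,(R_g - \underline{R})\,\omega_g^n$, is independent of the metric $g\in[\omega]$, and the Futaki--Mabuchi bilinear form $B(V, W) = \int_M \theta^g_V\,\theta^g_W\,\omega_g^n$ is likewise independent of the $K$-invariant metric $g\in[\omega]$; this invariance is exactly the content of \cite{FM}, and it is what makes the argument run. Combining the extremal identity $\theta^{g_j}_{V_j} = R_{g_j}-\underline{R}$ with these definitions gives, for every $W\in\mathfrak{z}$,
\begin{equation*}
B(V_j, W) = \int_M (R_{g_j}-\underline{R})\,\theta^{g_j}_W\,\omega_{g_j}^n = \mathcal{F}(W), \qquad j=1,2 .
\end{equation*}
Since the right-hand side does not depend on $j$ and $B$ is one fixed form, it follows that $B(V_1 - V_2, W) = 0$ for all $W\in\mathfrak{z}$. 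Finally $B$ is non-degenerate on $\mathfrak{z}$: on the real form $\mathfrak{z}_{\mathfrak{k}}$ the potentials are real and $B(X,X) = \int_M (\theta^g_X)^2\,\omega_g^n > 0$ for $X\neq 0$ (a vanishing potential forces $i_X\omega_g = 0$, i.e. $X=0$), and positive-definiteness of this real symmetric form passes to its complex-bilinear extension on $\mathfrak{z} = (\mathfrak{z}_{\mathfrak{k}})^{\bC}$. Hence $V_1 = V_2$, as claimed.

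The main obstacle is the metric-independence of the pairing $B$ on the Kähler class, which is the genuine analytic input of \cite{FM}; I would invoke it rather than reprove it, the point being that it is established there by differentiating $\int_M \theta^g_V\theta^g_W\,\omega_g^n$ along a path of $K$-invariant metrics and checking that the first variation vanishes, using the holomorphy of $V, W$ and the fact that they lie in the center $\mathfrak{z}$. On our side the delicate bookkeeping that still requires care is twofold: (i) justifying, via Theorem \ref{aut_cal} and Calabi's decomposition, that both $V_1$ and $V_2$ genuinely sit in the single space $\mathfrak{z}$ attached to the common group $K$ --- this is precisely where the hypothesis $\Iso_0(M,g_1)=\Iso_0(M,g_2)$ is used --- and (ii) matching the zero-mean normalizations of the holomorphy potentials so that the extremal identity $\theta^{g_j}_{V_j} = R_{g_j}-\underline{R}$ holds exactly and $B(V_j,\cdot)$ reproduces $\mathcal{F}$ with no spurious additive constants.
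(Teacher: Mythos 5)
Your argument is correct and is essentially the one the paper intends: the paper gives no proof of this statement, deferring entirely to Futaki--Mabuchi \cite{FM} together with Theorem \ref{aut_cal}, and what you have written is precisely the standard \cite{FM} uniqueness argument for the extremal vector field (both $V_j$ lie in the center of $\mathfrak{k}^{\bC}$ for the common maximal compact $K$, each represents the metric-independent Futaki character via the metric-independent, non-degenerate Futaki--Mabuchi pairing, hence they coincide). The only input you import without proof --- the invariance of the pairing along $K$-invariant metrics in $[\omega]$ --- is exactly the content of the cited reference, so this is a faithful unpacking rather than a different route.
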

\medskip

\noindent We assume next that $\omega$ is an extremal metric, and 
we denote by $K:= \Iso(M, \omega)$
the corresponding group of holomorphic isometries. The next step would be to consider the 
minimum $\displaystyle \omega_{\varphi_1}$ of the restriction of the functional $\iota$ to the 
space of potentials $\Psi^\omega(\cO)$ corresponding to $\omega$; in doing so, it is possible that the isometry group of $\displaystyle \omega_{\varphi_1}$ is different from $K$. 

In order to prevent this to happen, we will restrict the 
functional $\iota$ to the space of $\omega$-potentials which are $K$-invariant,
 defined as follows
\begin{align*}
\mathcal{H}_K^{\infty} (M) &= \{\varphi \in \mathcal{H}^{\infty}(M)| \varphi = \varphi \circ \sigma \text{ for any } \sigma \in K\};\\
\mathcal{H}_K^{k,\alpha} (M) &= \{\varphi \in \mathcal{H}^{k, \alpha}(M)| \varphi = \varphi \circ \sigma \text{ for any } \sigma \in K\};
\end{align*}
we equally consider the space
\begin{align*}
C_K^{k,\alpha}(M) &= \{ u \in C^{k,\alpha} (M)|u = u \circ \sigma \text{ for any } \sigma \in K\ \}.
\end{align*}
Let $\cO_K$ be the quotient $N_K/K$, where we denote by 
$N_K$ the normalizer of $K$ in $\Aut_0(M)$, that is to say the group consisting of $g\in \Aut_0(M)$ such that $gKg^{-1}= K$. 
\medskip

\noindent We have the following statement, which is the analogue of Proposition
\ref{tangent}.
\begin{prop}\label{Ktangent}
Let $(M, \omega)$ be a compact K\"ahler manifold, such that $\omega$ is extremal. Then the image of the tangent space $\displaystyle (\Psi^\omega)_\star(T_{\cO_K, g})$ coincides with 
the space generated by the real-valued functions $f\in \cC^\infty(M)$ which are $K$-invariant, such that $\nabla^{1,0}_\varphi f$ is holomorphic,
where $\varphi:= \Psi^\omega(g)$.
\end{prop}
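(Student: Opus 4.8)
The plan is to mimic the proof of Proposition \ref{tangent}, but carried out $K$-equivariantly and using Calabi's structure theorem to handle the fact that $\omega$ is merely extremal rather than cscK. First I would take a smooth path $g_t$ in $\cO_K = N_K/K$ with $g_0 = g$, and write $X := \frac{dg_t}{dt}\big|_{t=0} \in H^0(M, T_M)$ for the associated holomorphic vector field. The key point that distinguishes this setting from Proposition \ref{tangent} is that $X$ now arises from a curve in the normalizer $N_K$: differentiating the relation $g_t K g_t^{-1} = K$ shows that $X$ normalizes the Lie algebra of $K$ (equivalently, the flow of $X_\bR$ preserves $K$-invariance), and I expect this to be exactly the condition forcing the resulting potential to be $K$-invariant.

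Next I would invoke the Calabi decomposition (Theorem \ref{aut_cal}, \cite{Ca2}) for the algebra of holomorphic vector fields on a manifold admitting an extremal metric. Since $\omega$ is extremal, $X$ decomposes as $X = X_a + \nabla^{1,0}_{\varphi_0}(f + \sqrt{-1}\, h)$ where $X_a$ is the autoparallel (parallel) component and $f, h$ are real-valued functions with $\nabla^{1,0}_{\varphi_0} f$ and $\nabla^{1,0}_{\varphi_0} h$ holomorphic. Because $\Iso_0(M,\omega) = K$ is a maximal compact connected subgroup (again Theorem \ref{aut_cal}), the component tangent to the $K$-orbit is precisely the Killing part, so that in computing $\Psi^\omega_\star(X)$ modulo the tangent space to $K$ the autoparallel and Killing contributions drop out. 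Differentiating $g_t^\star \omega = \omega_{\varphi_t}$ at $t=0$ exactly as in \eqref{equa7} then yields $\sqrt{-1}\,\ddbar \dot\varphi_0 = \sqrt{-1}\,\partial\bar\partial f$, whence $\dot\varphi_0 = f - \int f\,\omega^n$, so the image is generated by the real-valued $f$ with $\nabla^{1,0}_\varphi f$ holomorphic.

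The remaining piece is to check the $K$-invariance of the generators $f$. Here I would use that $X$ normalizes $\mathrm{Lie}(K)$: for each $\sigma \in K$ the pullback $\sigma^\star$ fixes $\omega$ and permutes the holomorphic vector fields tangent to the $K$-orbit, so $\sigma^\star X$ again lies in the image of the tangent space and differs from $X$ by a Killing field. Pushing this through the decomposition, the real potential $f$ associated to $X$ must satisfy $f \circ \sigma = f$ up to an additive constant, and after the normalization $\int f\,\omega^n = 0$ this constant vanishes; thus $f \in \cC^\infty_K(M)$. Conversely, any $K$-invariant real $f$ with $\nabla^{1,0}_\varphi f$ holomorphic produces, via its real flow, a one-parameter subgroup normalizing $K$ and hence a tangent vector of $\cO_K$, giving the reverse inclusion.

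I expect the main obstacle to be the equivariance bookkeeping in the last step, namely verifying that the condition ``$X$ comes from the normalizer $N_K$'' translates cleanly into ``$f$ is $K$-invariant.'' The subtlety is that the Calabi decomposition is with respect to $\omega_{\varphi_0}$, and one must confirm that the averaging/normalization is compatible with the $K$-action and that the $K$-invariance of $f$ is not spoiled by the autoparallel or Killing components being reshuffled under $\sigma^\star$. This is precisely the place where Theorem \ref{aut_cal} is essential: without knowing that $K$ is a maximal compact subgroup, one could not guarantee that the Killing part of $X$ is fully absorbed into the tangent directions of $K$, and the clean identification of the image with $K$-invariant potentials would fail.
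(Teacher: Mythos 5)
There is a genuine gap at the decomposition step, and it is precisely the difficulty that makes the extremal case harder than the cscK case. You write that, since $\omega$ is extremal, $X$ decomposes as $X = X_a + \nabla^{1,0}_{\varphi_0}(f + \sqrt{-1}\,h)$ with $f,h$ \emph{real-valued} and \emph{both} $\nabla^{1,0}_{\varphi_0}f$ and $\nabla^{1,0}_{\varphi_0}h$ holomorphic. That splitting is only available when the Lichnerowicz operator $\cD_{\varphi_0}$ is a real operator, i.e.\ when the scalar curvature is constant: the first-order term $\langle \partial R_{\varphi_0}, \dbar f\rangle_{\varphi_0}$ in \eqref{equa14} is not real in general, so $\Ker\cD_{\varphi_0}$ is not stable under complex conjugation, and from $\cD_{\varphi_0}(f+\sqrt{-1}h)=0$ you cannot conclude $\cD_{\varphi_0}f=\cD_{\varphi_0}h=0$. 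The paper flags this explicitly in the proof of Proposition \ref{tangent} (``notice that here we are using the cscK condition'') and in the introduction (``the Lichnerowicz operator \dots is not real in general''). Your subsequent steps all lean on this unavailable splitting, so the argument does not close. A secondary point: the $K$-invariance of the potential itself is the easy part (it follows in one line from $\sigma^\star g^\star\omega = g^\star\omega$ for $g\in N_K$, $\sigma\in K$); the real work is showing that the complex potential of $X$ can be replaced by a genuinely real one.

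What the paper does instead is the content you are missing. Since $\omega_{\varphi_0}$ is extremal, $[\cD_{\varphi_0},\bar\cD_{\varphi_0}]=0$, so $E=\Ker\cD_{\varphi_0}$ decomposes into eigenspaces $E_\lambda$ of $\bar\cD_{\varphi_0}$ and one writes $X = X_a + \nabla^{1,0}_{\varphi_0}(f_0+\sum_{\lambda>0}f_\lambda)$ with $f_\lambda\in E_\lambda$ \emph{complex-valued}. The normalizer condition $g_tKg_t^{-1}=K$ gives $f+\bar f = (f+\bar f)\circ\sigma$ for all $\sigma\in K$; applying $\bar\cD_{\varphi_0}$ repeatedly (it kills $\bar f$ and acts by $\lambda$ on $f_\lambda$) and a Vandermonde argument show each $f_\lambda$ with $\lambda>0$ is $K$-invariant. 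Then, because $\Xi_{\varphi_0}=\Im(\nabla^{1,0}_{\varphi_0}R_{\varphi_0})$ generates a one-parameter subgroup of $K$, invariance of $f_\lambda$ under its flow yields $\lambda f_\lambda = (\bar\cD_{\varphi_0}-\cD_{\varphi_0})f_\lambda = 0$, so $f_\lambda=0$ for $\lambda>0$. One lands in $f_0\in\Ker\cD_{\varphi_0}\cap\Ker\bar\cD_{\varphi_0}$, which \emph{is} conjugation-stable, and only then can one take $\Re(f_0)$ and conclude as in Proposition \ref{tangent}. Without this eigenspace analysis the identification of the image with real $K$-invariant potentials is not justified.
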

\begin{proof}
First, we have to check that the image of $\displaystyle \Psi^\omega|_{\cO_K}$ consists of $K$-invariant potentials. Let $g\in N_K$; we have
\begin{equation}\label{0506}
g^\star\omega= \omega+ \sqrt{-1}\ddbar \varphi.
\end{equation}
and let $\sigma\in K$. Since $g$ belongs to the normalizer of $K$, we have
$$\sigma^\star g^\star\omega= g^\star\omega$$
hence by \eqref{0506} we obtain $\varphi\circ\sigma= \varphi$.

Let $g_t$ be a smooth path in $N_K$, such that 
$g_0= g$ and such that the derivative $\displaystyle \frac{dg_t}{dt}\big|_{t=0}$ identifies with a holomorphic vector field which we denote by $X$. There exists a smooth family $(\varphi_t)\subset \wt \cH$ with $\varphi_0 = \varphi$ so that we have 
\begin{equation}\label{equa50}
g_t^\star\omega:= \omega_{\varphi_t}
\end{equation}
for any parameter $t$. Since $\omega$ is extremal and $K$-invariant, so is $\omega_{\varphi_0}$. By Calabi's theorem\cite{Ca2}, we can decompose a holomorphic (1,0) vector field as follows
\begin{align*}
T_e Aut_0(M) = a(M) \oplus \nabla^{1,0}_{\varphi_0} E,
\end{align*}
where $a(M)$ are autoparallel vectors on $(M, \omega_{\varphi_0})$ and E is the kernel of Lichnerowicz derivative, i.e. $E=\{f \in C^{\infty}(M, \mathbb{C}) | \cD_{\varphi_0}f = 0\}$. 

We denote by $\bar{\cD}_{\varphi_0}$  the conjugate of the operator $\cD_{\varphi_0}$. 
Since the metric $\displaystyle \omega_{\varphi_0}$ is extremal, we have
$$[\cD_{\varphi_0}, \bar \cD_{\varphi_0}]= 0$$
i.e. the two operators commute. In particular, we can further decompose the space
$E$ according to the eigenspaces of $\bar\cD_{\varphi_0}|_{E}$, so that we have
\begin{align*}
T_e Aut_0(M) = a(M) \oplus \nabla^{1,0}_{\varphi_0} E_0 \oplus \sum_{\lambda > 0} \nabla^{1,0}_{\varphi_0} E_{\lambda},
\end{align*}
where $E_{\lambda}$ represents the $\lambda$-eigenspace of $\bar\cD_{\varphi_0}$. Notice here that the eigenvalues above are real and nonnegative.

By the above discussion, we can write 
\begin{align*}
X = X_a + \nabla_{\varphi_0}^{1,0}( f_0 + \sum_{\lambda> 0}  f_{\lambda})
\end{align*}
where $X_a \in a(M)$ and $f_{\lambda} \in E_{\lambda}$ for $\lambda \geq 0$. Notice here the sum is finite since $\Aut_0(M)$ is finite dimensional. Since $g_t \in N_K$, it implies that for any $\sigma \in K$,
\begin{align*}
g_t^* \sigma^* (g_t^{-1})^* \omega_{\varphi_0} = \omega_{\varphi_0}.
\end{align*}
Differentiate with respect to t, 
\begin{align*}
0= (\frac{\mathrm{d}}{\mathrm{d} t} g_t^* \sigma^* (g_t^{-1})^* \omega_{\varphi_0})|_{t=0} &= \sigma^* (\frac{\mathrm{d}}{\mathrm{d} t} (g_t^{-1})^* \omega_{\varphi_0})|_{t=0} + (\frac{\mathrm{d}}{\mathrm{d} t} g_t^* \sigma^*\omega_{\varphi_0})|_{t=0}\\
& = \sqrt{-1} \partial \bar{\partial}[( f+\bar{f}) - ( f+\bar{f})\circ \sigma]
\end{align*}
where $f = f_0 + \sum_{\lambda>0} f_{\lambda}$. Hence we get for any $\sigma \in K$
\begin{align}\label{eqn4.1}
f+ \bar{f}- (f + \bar{f}) \circ \sigma = 0.
\end{align}
Applying $\bar\cD_{\varphi_0}$ on both hand sides of $(\ref{eqn4.1})$ k times, we get that
\begin{align*}
\sum_{\lambda>0} \lambda^k  (f_{\lambda} - f_{\lambda} \circ \sigma) = 0.
\end{align*}
Thus we infer that $f_{\lambda} - f_{\lambda} \circ \sigma=0$ for any $\sigma \in K$. Consider
$$\Xi_{\varphi_0}: = \Im( \nabla_{\varphi_0}^{1,0} R_{\varphi_0}) = \frac{\sqrt{-1}}{2} [g_{\varphi_0}^{\alpha\bar{\beta}} R_{\varphi_0, \bar{\beta}}\frac{\partial}{\partial z_{\alpha}} - g_{\varphi_0}^{\alpha \bar{\beta}} R_{\varphi_0, \alpha} \frac{\partial}{\partial \bar{z_{\beta}}} ],$$
and $\exp(t\Xi_{\varphi_0})$ is a one parameter subgroup of $K$. Since $f_{\lambda}$ is $K$-invariant,
\begin{align*}
0 = \frac{d}{dt} \exp(t \Xi_{\varphi_0})^* f_{\lambda} = \Xi_{\varphi_0}( f_{\lambda}) = \frac{\sqrt{-1}}{2}[R_{\varphi_0, \bar{\delta}} f_{\lambda, \delta} - R_{\varphi_0, \delta} f_{\lambda, \bar{\delta}}].
\end{align*}
Hence 
\begin{align*}
\lambda f_{\lambda} = \bar\cD_{\varphi_0} f_{\lambda} = -(\cD_{\varphi_0} - \bar{\cD}_{\varphi_0} ) f_{\lambda}= R_{\varphi_0, \bar{\delta}} f_{\lambda, \delta} - R_{\varphi_0, \delta} f_{\lambda, \bar{\delta}} = 0.
\end{align*}
Therefore, $f_{\lambda } = 0$ for any $\lambda > 0$. Thus, 
$$X = X_a + \nabla^{1,0}_{\varphi_0} f_0$$
where $f_0 \in KerD_{\varphi_0} \cap Ker \bar D_{\varphi_0}$ is a $K$-invariant complex-valued function. Therefore, $\Re(f_0)$ and $\Im(f_0)$ are both $K$-invariant and belong to 
$\Ker \cD_{\varphi_0} \cap \Ker \bar\cD_{\varphi_0}$. By differentiating ($\ref{equa50}$) at $t=0$, we get that
$$\sqrt{-1} \partial \bar \partial \dot \varphi_0 = \sqrt{-1} \partial \bar \partial \Re(f_0). $$
The rest of the argument follows from Proposition $\ref{tangent}$ and it ends the proof.
\end{proof}
\medskip

\noindent Precisely as in Lemma \ref{min}, the restriction $\displaystyle \iota|_{\Psi^\omega(\cO_K)}$
is proper.
Let $\varphi_1\in \Psi^\omega(\cO_K)$ be the potential for which its minimum is reached; we denote by $\omega_{\varphi_1}$ the resulting (extremal) metric. We note that we have the equality
$$\Iso(M, \omega_{\varphi_1})= K$$
by Theorem \ref{aut_cal}.

Let $X_1 = \nabla_{\varphi_1}^{1,0} R_{\varphi_1} $ be the holomorphic vector field
corresponding to the metric $\omega_{\varphi_1}$. We define the functional
$\displaystyle \cF_K: \mathcal{H}_K^{4,\alpha} (M) \times [0,1] \longrightarrow C_K^{0,\alpha}(M)$ by the formula
\begin{equation}\label{0507}
\cF_K(\varphi, t)= R_{\varphi} - \underline{R} - (1-t) (\tr_{\varphi} \omega - n) - \rho_{\varphi}(X_1)
\end{equation}
where we recall that $\rho_{\varphi} (X_1)$ is uniquely determined by 
\begin{equation}\label{0508}
i_{X_1} \omega_{\varphi} =  \sqrt{-1} \bar{\partial} \rho_{\varphi} (X_1), \quad
\int_M \rho_{\varphi}(X_1) \omega_{\varphi}^n = 0.
\end{equation}
\medskip

\begin{remark}{\rm
If $\varphi \in \cH_K^{4,\alpha}(M)$, then $\rho_{\varphi}(X_1)$ is real-valued. This is because
\begin{align}
\sqrt{-1} \dbar \rho_{\varphi} (X_1) = i_{X_1}\omega_{\varphi} &= i_{X_1} \omega_{\varphi_1} + i_{X_1} \big(\sqrt{-1} \partial \dbar (\varphi - \varphi_1) \big)\\
&=\sqrt{-1} \dbar(R_{\varphi_1} + X_1(\varphi - \varphi_1)).
\end{align}
Thus
\begin{align}
\rho_{\varphi} (X_1) =R_{\varphi_1} + X_1(\varphi - \varphi_1) - \int_M \big(R_{\varphi_1} + X_1(\varphi - \varphi_1) \big) \omega_{\varphi}^n.
\end{align}
And the imaginary part of $\rho_{\varphi} (X_1)$ is given by
\begin{align}
\Im(\rho_{\varphi}(X_1)) = \Im(X_1) (\varphi -\varphi_1) - \int_M \Im(X_1) (\varphi -\varphi_1) \omega_{\varphi}^n.
\end{align}
On the other hand, we know that $\Im(X_1)$ is in the Lie algebra of $\Iso(M, \omega_{\varphi_1}) = K$. Since $(\varphi - \varphi_1)$ is $K$-invariant, we obtain $\Im(\rho_{\varphi} (X_1)) = 0$.
}
\end{remark}

\noindent By the discussion at the beginning of this section, the following perturbation theorem implies Theorem $\ref{thm1.2}$.

\begin{thm}\label{thm4.1}
Under the notations and conventions above, for any $t \in (0,1)$ sufficiently close to 1, there exists 
$\varphi(t, \cdot)= \varphi_t \in \mathcal{H}_K^{4,\alpha} (M)$ such that $\cF_{K}(\varphi_t, t) = 0$ 
and such that $\varphi(1, \cdot)$ is the potential $\varphi_1$ achieving the minimum of $\iota$.
\end{thm}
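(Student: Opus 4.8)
The plan is to prove Theorem \ref{thm4.1} by mirroring the structure of the proof of Theorem \ref{main}, applying the implicit function theorem to the $K$-invariant functional $\cF_K$ at the point $(\varphi_1, 1)$, where it vanishes by construction. The first step is to compute the differential $d(\cF_K)_{(\varphi_1,1)}$ acting on $(u,s) \in C_K^{4,\alpha}(M) \times \bR$. Since $\omega_{\varphi_1}$ is extremal and $X_1 = \nabla^{1,0}_{\varphi_1} R_{\varphi_1}$, the linearization of the scalar curvature term gives $-\cD_{\varphi_1} u$ plus the linearization of $\rho_\varphi(X_1)$. Using the explicit formula $\rho_\varphi(X_1) = R_{\varphi_1} + X_1(\varphi-\varphi_1) - \text{(average)}$ from the Remark, the derivative of $\rho_\varphi(X_1)$ in the direction $u$ is $X_1(u)$ up to a constant, so I expect the differential to take the form $(u,s) \longmapsto \big(-\cD_{\varphi_1} u - X_1(u) + \langle\partial u,\dbar R_{\varphi_1}\rangle_{\varphi_1}(\cdots) + s(\tr_{\varphi_1}\omega-n),\, s\big)$, which should simplify so that the relevant operator is $\bar\cD_{\varphi_1}$ (the conjugate Lichnerowicz operator) rather than $\cD_{\varphi_1}$. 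As in the cscK case, this differential is neither injective nor surjective: its kernel contains the $K$-invariant functions $u$ with $\nabla^{1,0}_{\varphi_1}u$ holomorphic, i.e. $\cD_{\varphi_1}u = \bar\cD_{\varphi_1}u = 0$ by Proposition \ref{Ktangent}.

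To handle this degeneracy I would set up the Lyapunov--Schmidt reduction exactly as before, but carried out inside the $K$-invariant spaces. Define $\mathcal{H}_{\varphi_1}$ to be the $K$-invariant normalized kernel, and $\mathcal{H}_{\varphi_1,k}^\perp$ its orthogonal complement in $C_K^{k,\alpha}(M)$, giving a decomposition $C_K^{k,\alpha}(M) = \bR \oplus \mathcal{H}_{\varphi_1} \oplus \mathcal{H}_{\varphi_1,k}^\perp$. The minimality of $\varphi_1$ for $\iota|_{\Psi^\omega(\cO_K)}$ yields, via \eqref{grad} and Proposition \ref{Ktangent}, the orthogonality relation $\tr_{\varphi_1}\omega - n \in \mathcal{H}_{\varphi_1,0}^\perp$, which guarantees that the projected map $\Pi$ has bijective differential at $(0,1)$. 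The inverse function theorem then produces a solution $\psi(u,t)$ of the $\mathcal{H}_{\varphi_1,0}^\perp$-component of $\cF_K = 0$, and one reads off $\partial\psi/\partial t|_{(0,1)}$ and $\partial\psi/\partial u|_{(0,1)}(v) = 0$ for $v \in \mathcal{H}_{\varphi_1}$ just as in \eqref{42}. The genuinely finite-dimensional equation is then $P(u,t) := \pi_1 \circ \cF_K(\varphi_1 + u + \psi(u,t), t) = 0$ on $\mathcal{H}_{\varphi_1}$, which again degenerates at $t=1$, forcing us to pass to $\wt P(u,t) := P(u,t)/(t-1)$ extended continuously by $\partial P/\partial t|_{(u,1)}$.

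The crux of the argument is showing that $\partial\wt P/\partial u|_{(0,1)}$ is invertible on $\mathcal{H}_{\varphi_1}$, and this is exactly where Lemma \ref{lem3} enters, now in its full extremal generality. Writing $\xi := \partial\psi/\partial t|_{(0,1)}$ and differentiating the expression for $\wt P(u,1) = \partial P/\partial t|_{(u,1)}$, the second-order terms assemble into the bilinear operator $B_{\varphi_1}(v,\xi)$ together with the term $-\langle\partial\bar\partial v, \chi\rangle_{\varphi_1}$ coming from the $\tr_{\varphi_1}\omega$ factor. Applying the Leibniz identity \eqref{equa41} — whose hypothesis $\cD_{\varphi_1}v = \bar\cD_{\varphi_1}v = 0$ is precisely satisfied because $v \in \mathcal{H}_{\varphi_1}$ is in the common kernel by Proposition \ref{Ktangent} — converts $B_{\varphi_1}(v,\xi)$ into $\cD_{\varphi_1}\langle\partial v,\dbar\xi\rangle_{\varphi_1} - \langle\partial v,\dbar\cD_{\varphi_1}\xi\rangle_{\varphi_1}$, and using the defining relation for $\xi$ this collapses to $\pi_1\big(-\langle\partial v,\dbar(\tr_{\varphi_1}\omega-n)\rangle_{\varphi_1} - \langle\partial\bar\partial v,\omega\rangle_{\varphi_1}\big)$. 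Pairing against $v$ and integrating by parts should then produce the manifestly nonnegative quantity $\int v_{,\bar\alpha}v_{,\beta}\,\omega_{\alpha\bar\beta}\,\omega_{\varphi_1}^n \geq 0$, strictly positive unless $v=0$, establishing injectivity and hence bijectivity on the finite-dimensional space $\mathcal{H}_{\varphi_1}$. The implicit function theorem then yields $u_t$ with $\wt P(u_t,t)=0$, and therefore $P(u_t,t)=0$, for $t$ near $1$.

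I expect the main obstacle to be the bookkeeping in the linearization of $\rho_\varphi(X_1)$ and verifying that everything remains within the $K$-invariant category: one must check that $X_1$ maps $K$-invariant functions to $K$-invariant functions, that the projections $\pi_1,\pi_2$ respect $K$-invariance, and — most delicately — that the non-self-adjoint, non-real nature of $\cD_{\varphi_1}$ in the extremal (non-cscK) case does not spoil the integration-by-parts step producing the positive quadratic form. The role of Theorem \ref{aut_cal} and Theorem \ref{uniq_vf} is to ensure $\Iso(M,\omega_{\varphi_1})=K$ and that the holomorphic vector field $X_1$ is intrinsically determined, so that the whole construction is independent of the chosen representative; this is what ultimately makes Corollary \ref{cor1.4} accessible, but for the present theorem it is the positivity computation in the extremal setting that is the real work.
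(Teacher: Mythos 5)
Your proposal is correct and follows essentially the same route as the paper: linearize $\rho_\varphi(X_1)$ to see that it cancels the gradient-of-$R_{\varphi_1}$ term so that $d\cF_K|_{(\varphi_1,1)}(u,s)=-\cD_{\varphi_1}u+s(\tr_{\varphi_1}\omega-n)$, carry out the Lyapunov--Schmidt reduction inside the $K$-invariant spaces using the minimality of $\iota|_{\Psi^\omega(\cO_K)}$, and invert $\partial\wt P/\partial u|_{(0,1)}$ via Lemma \ref{lem3}, whose hypothesis $\cD_{\varphi_1}v=\bar\cD_{\varphi_1}v=0$ holds by Proposition \ref{Ktangent}. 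Your remark that the operator is ``really'' $\bar\cD_{\varphi_1}$ is consistent with the paper, since $\cD_{\varphi_1}-\bar\cD_{\varphi_1}$ is differentiation along $\Im(X_1)\in\mathrm{Lie}(K)$ and hence vanishes on $C^{k,\alpha}_K(M)$.
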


\begin{proof} The arguments are very similar to the ones used in the proof of Theorem \ref{thm1.1}; for the convenience of the reader, we review here the slight differences.
To start with, the expression of the linearization at $(\varphi_1,1)$ of $\cF_K$ has an additional term, which we now 
compute. 

By differentiating the first term of \eqref{0508}, we obtain
\begin{equation}\label{0509}
\dbar \dot \rho_\varphi(X_1)= \dbar X_1(\dot\varphi)
\end{equation}
so that $\displaystyle \dot \rho_\varphi(X_1)- X_1(\dot\varphi)$ is constant. On the other hand,
by differentiating the second term of \eqref{0508} we infer that we have
\begin{equation}\label{0510}
\int_M \big(\dot\rho_{\varphi}(X_1)+  \rho_{\varphi}(X_1)\Delta_\varphi(\dot\varphi)\big)\omega_{\varphi}^n =0.
\end{equation}
Integration by parts together with the relation \eqref{0508} gives
\begin{equation}\label{0511}
\int_M\dot \rho_\varphi(X_1)- X_1(\dot\varphi) \omega_{\varphi}^n =0
\end{equation}
so in conclusion, we have $\displaystyle \dot \rho_\varphi(X_1)= X_1(\dot\varphi)$. Given the definition of 
$X_1$, this is equivalent to
\begin{equation}\label{0512}
\dot \rho_\varphi(X_1)= \langle \partial \dot\varphi, \bar \partial R_\varphi\rangle_{\omega_\varphi}.
\end{equation}

\noindent Then the derivative of $\cF_K$ at $(\varphi_1, 1)$ has the following expression
\begin{align*}
d\cF|_{(\varphi_1, 1)} : C^{4, \alpha}_K(M) \times \mathbb{R} &\longrightarrow C^{0,\alpha}_K(M)\\
(u , s ) &\longmapsto -\cD_{\varphi_1} u + s (\tr_{\varphi_1} \omega- n).
\end{align*}
where --exactly as in the case of cscK metrics-- the operator $\cD_\varphi$ is the Lichnerowicz operator.

\noindent We define the following functional spaces:
\begin{align*}
\mathcal{H}_{K, \varphi_1}:& =\{ u \in \mathcal{H}_K^{\infty}(M) | \cD_{\varphi_1} u = 0, \int u \omega_{\varphi_1}^n = 0\},\\
\mathcal{H}_{K, \varphi_1, k}^{\perp}:& = \{u \in C^{k,\alpha}_K(M)| \int uv \omega_{\varphi_1}^n = 0 \text{ for any } v \in \mathcal{H}_{K, \varphi_1}, \int u \omega_{\varphi_1}^n = 0\}.
\end{align*}
and then we have the following statement.
\begin{lem}We have the orthogonal decomposition
$$\cC_K^{k, \alpha}(M) =\mathbb{R} \oplus \mathcal{H}_{K,\varphi_1} \oplus \mathcal{H}_{K,\varphi_1, k}^{\perp}.$$
\end{lem}
\begin{proof}  
Indeed, this is a consequence of the fact that the operator $\displaystyle \cD_{\varphi_1}$ is $K$-invariant and self adjoint on $C_K^{\infty}(M)$. And we have $C^{0, \alpha}_K(M)= \cD_{\varphi_1}\big(C^{4, \alpha}_K(M)\big)\oplus \Ker(\cD_{\varphi_1})$
which can be derived from the elliptic operators theory.
\end{proof}

\noindent The rest of the proof of Theorem $\ref{thm4.1}$ is strictly identical to the one presented in the previous 
section, so we will not discuss it further here.
\end{proof}
\medskip

\noindent We prove next Corollary \ref{cor1.4}.


\begin{proof} We begin with a few reductions.
By Theorem \ref{aut_cal}, combined with the fact that the maximal compact subgroups of $\Aut_0(M)$ are 
conjugate (by a result of Matsushima), we can assume that we have
$$\Iso(M, \omega_{1})= \Iso(M, \omega_{2}).$$
We can equally assume that $\varphi_j$ is the minimum point of the functional 
$\displaystyle \iota|_{\Psi_{\omega_j}}$, for $j=1, 2$. Then we still have 
\begin{equation}
\Iso(M, \omega_{\varphi_1})= \Iso(M, \omega_{\varphi_2})
\end{equation}
and by Theorem \ref{uniq_vf} we have $X_1= X_2:= X$.
 
Theorem $\ref{thm4.1}$ shows that there exists two paths of twisted extremal metrics, $\varphi_{k, t}$ 
with $\varphi_{k,1} = \varphi_k$ for $k=1,2$ satisfying
$$\nabla_{\varphi_{k, t}}^{1,0} (R_{\varphi_{k, t}} - (1-t)\text{tr}_{\varphi_{k, t}} \chi) = X_k.$$ 
Hence, we get two smooth families ($t \in (1-\epsilon , 1]$) of solutions to the equation 
\begin{align}\label{eqn4.10}
R_{\varphi_t} -\underline{R} - \rho_{\varphi_t}(X)- (1-t)(\text{tr}_{\varphi_t}\omega - n) = 0.
\end{align}
We prove next that for fixed $t \in (0, 1)$, the $K$-invariant smooth solution of $(\ref{eqn4.10})$ is unique. First, we introduce the modified K-energy (c.f. \cite{FM}) on $\mathcal{H}_{K}^{\infty}$
\begin{align*}
\frac{\mathrm{d} \cE_K}{\mathrm{d}t} &= \int_M \big(- (R_{\varphi} - \underline{R}) + \rho_{\varphi} (X)\big)\frac{\mathrm{d} \varphi}{\mathrm{d}t}\omega_{\varphi}^n.
\end{align*}
And $\cE_K$ is weakly convex along any $K$-invariant $C^{1,1}$ geodesic segment by \cite{BB} and \cite{CLP}. Moreover, $\iota$ is strictly convex along $C^{1,1}$ geodesic segments. Therefore, for $t \in (0,1)$ 
\begin{align}\label{eqn4.11}
\cE_K+ (1-t)\iota
\end{align}
is strictly convex along any $K$-invariant $C^{1,1}$ geodesic segment. Also note that any two $K$-invariant K\"ahler potentials can be joined by a $K$-invariant $C^{1,1}$ geodesic. 

By the strict convexity, we can conclude the $K$-invariant solution of $(\ref{eqn4.10})$ is unique. Hence $\varphi_{1,t} = \varphi_{2,t}$ for $t \in (1- \epsilon, 1)$. As $t\rightarrow 1$ we get that $\varphi_1 = \varphi_2$, which is a contradiction, and the proof of Corollary \ref{cor1.4} is finished.
\end{proof}

\end{document}